\title[Solutions to certain differential and difference equations]{Effective finiteness of solutions to certain differential and difference equations}
\author{Patrick Ingram}
\date{\today}
\address{York University, Toronto, Canada}
\thanks{The author would like to thank Gary Gundersen and two anonymous referees for helpful comments on an earlier draft of this note. This research was supported by a grant from NSERC}
\renewcommand{\epsilon}{\varepsilon}
\renewcommand{\phi}{\varphi}
\newcommand{\PP}{\mathbb{P}}
\newcommand{\CC}{\mathbb{C}}
\newcommand{\QQ}{\mathbb{Q}}
\newcommand{\Res}{\operatorname{Res}}
\newtheorem{lemma}{Lemma}
\newtheorem{theorem}[lemma]{Theorem}
\newtheorem{corollary}[lemma]{Corollary}
\newtheorem{question}[lemma]{Question}
\theoremstyle{definition}
\newtheorem{example}[lemma]{Example}
\newtheorem{remark}[lemma]{Remark}
\begin{document}
\begin{abstract} 
For $R(z, w)\in \CC(z, w)$ of degree at least 2 in $w$, we show that the number of rational functions $f(z)\in \CC(z)$ solving the difference equation $f(z+1)=R(z, f(z))$ is finite, bounded just in terms of the degrees of $R$ in the two variables. This complements a result of Yanagihara, who showed that any finite-order meromorphic solution to this sort of difference equation must be a rational function. We prove a similar result for the differential equation $f'(z)=R(z, f(z))$, building  on a result of Eremenko.
\end{abstract}
\maketitle

Malmquist~\cite{MR1555091} showed that if $R(z, w)\in \CC(z, w)$ is a rational function, and $f(z)$ is a meromorphic solution to the differential equation
\begin{equation}\label{eq:diff}f'(z)=R(z, f(z)),\end{equation} 
 then either $R(z, w)$ is a polynomial of degree at most 2 in $w$ (and hence~\eqref{eq:diff} is a linear or Ricatti equation), or else $f(z)$ is a rational function.  Eremenko~\cite{MR1601867} established a bound on the degree of $f$ in the latter case (for more general first-order ODEs). In the case that $R$ is a polynomial in both variables, Gundersen~\cite{MR2419455} established bounds on the number of solutions to~\eqref{eq:diff}, in terms of the degree of $R$ in $w$ and the number of distinct roots $z$ of the leading coefficient (see also \cite{MR2232199, MR808678}).
 
A difference-equation analogue of Malmquist's Theorem was derived by Yanagihara~\cite{MR621536}, who showed that any finite-order meromorphic solution $f$ to
\begin{equation}\label{eq:eq}f(z+1)=R(z, f(z)),\end{equation}
is rational, assuming that $\deg_w(R)\geq 2$. %(there may be irrational solutions of infinite order, however, as witnessed by $e^{2^{z+1}}=(e^{2^z})^2$).
The purpose of this note is to establish a  result complementary to Yanagihara's, specifically that the number of rational solutions to~\eqref{eq:eq} is finite and bounded just in terms of the degree of $R$ in each variable. Indeed, our proof is effective in the sense that it gives us an in-principle computable list of rational functions which must contain all solutions. As our methods apply to~\eqref{eq:diff} with minor modifications, and offer a different approach to computing the finite set of solutions in certain cases, we treat that as well, although in the differential context this largely amounts to a new approach to a known result.

\begin{theorem}\label{th:finite}
Let $R(z, w)\in \CC(z, w)$. There exist explicit constants $B_1$ and $B_2$, depending just on $\deg_w(R)$ and $\deg_z(R)$, such that the following hold:
\begin{enumerate}
\item  If $\deg_w(R)\geq 2$, then there are at most $B_1$  rational functions $f$ solving~\eqref{eq:eq}, and the set of solutions is effectively computable. 
\item If $\deg_w(R)\geq 3$, then there are at most $B_2$ rational functions $f$ solving~\eqref{eq:diff}, and the set of solutions is effectively computable.\end{enumerate}
\end{theorem}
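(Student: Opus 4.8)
The plan is to split the proof into (i) an a priori bound on $\deg(f)$ depending only on $m:=\deg_z(R)$ and $n:=\deg_w(R)$, and (ii) a proof that, among rational functions of bounded degree, the solution set is not merely bounded in degree but finite and effectively computable. For (i) I would use the function‑field height $h(g):=\deg(g)$ of $g\in\PP^1(\CC(z))$ — the degree of $g$ as a self‑map of $\PP^1$, i.e.\ the number of poles counted with multiplicity. Two facts feed in. First, $z\mapsto z+1$ is an automorphism of $\PP^1$, so $h(f(z+1))=h(f)$, while $h(f'(z))\le 2\,h(f)$ since the poles of $f'$ lie over those of $f$ with multiplicity raised by one. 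Second, writing $R=P/Q$ with $P,Q\in\CC[z,w]$ coprime, the assignment $w\mapsto R(z,w)$ is a morphism $\PP^1\to\PP^1$ of degree $n$ over the field $F=\CC(z)$, whose homogeneous defining forms have coefficients of $F$‑height $\le m$; the standard morphism height inequality then gives $|h(R(z,g))-n\,h(g)|\le C$ for every $g$, with $C=C(m,n)$ explicit (controlled by the $F$‑height of $\Res_w(P,Q)$, which is $O(mn)$). Combining: a solution of~\eqref{eq:eq} satisfies $h(f)=h(R(z,f))\ge n\,h(f)-C$, hence $h(f)\le C/(n-1)$ when $n\ge 2$; a solution of~\eqref{eq:diff} satisfies $2h(f)\ge h(R(z,f))\ge n\,h(f)-C$, hence $h(f)\le C/(n-2)$ when $n\ge 3$. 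Write $D$, resp.\ $D'$, for these bounds.

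For (ii), fix the relevant bound $D$ (the differential case is identical with $D'$) and write a putative solution as $f=p/q$ with $\deg p,\deg q\le D$. Clearing denominators in~\eqref{eq:eq} makes it a system of polynomial equations, of degree $\le n+1$, in the $2D+2$ coefficients of $p$ and $q$; these cut out a closed subvariety $W\subseteq\PP^{2D+1}$, computable by elimination theory. If $W$ is finite, a Bézout estimate gives $|W|\le (n+1)^{2D+1}$, so the theorem reduces to showing $W$ is zero‑dimensional — equivalently, that~\eqref{eq:eq} (resp.~\eqref{eq:diff}) has no positive‑dimensional family of rational solutions.

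Suppose, for~\eqref{eq:eq}, that $C_0\subseteq W$ were an irreducible curve. Its generic point is a rational function $f\in\CC(C_0)(z)\setminus\CC(z)$ solving $f(z+1)=R(z,f(z))$ over $\CC(C_0)(z)$. A specialization argument shows $f$ is transcendental over $\CC(z)$: otherwise its minimal polynomial over $\CC(z)$ would, upon specializing $C_0$ at a generic point, acquire a root in $\CC(z)$, impossible for an irreducible polynomial of degree $\ge 2$. So $E:=\CC(z,f)$ is a rational function field in two variables over $\CC$, the automorphism $\sigma:z\mapsto z+1$ carries $E$ isomorphically onto the subfield $\CC(z,R(z,f))\subsetneq E$ of index exactly $n$, and $E$ sits inside the finitely generated field $K(z)$ with $K=\CC(C_0)$, of transcendence degree $2$. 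Let $M:=\CC(z)(\sigma^k f:k\in\ZZ)$. Since $\sigma^k f\in E$ for $k\ge 0$, and each $\sigma^{-k}f$ is algebraic of degree $\le n$ over the subfield generated by the earlier iterates, $M/E$ is algebraic, hence finite because $K(z)/E$ is finite. But $M$ is $\sigma$‑stable, so $\sigma$ restricts to an automorphism of $M$; comparing $[M:\sigma E]=[M:E]\,[E:\sigma E]=n\,[M:E]$ with $[M:\sigma E]=[\sigma^{-1}M:\sigma^{-1}(\sigma E)]=[M:E]$ forces $n=1$, a contradiction. Hence $W$ is finite.

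For~\eqref{eq:diff} I would instead argue via the rational vector field $V=\partial_z+R(z,w)\,\partial_w$ on $\PP^1\times\PP^1$: a positive‑dimensional family of rational solutions yields infinitely many distinct algebraic integral curves of $V$ — the graphs $\Gamma_{f_t}$, of bidegree $(\le D',1)$ — so, by Jouanolou's theorem (or, concretely, a Chow‑variety argument), $V$ has a rational first integral, and since all but finitely many of its fibres must be such graphs, one may take it of the shape $\Phi=(a_1(z)w+a_0(z))/(b_1(z)w+b_0(z))$. Imposing $V(\Phi)=0$ gives the identity $R(z,w)\,(a_1 b_0-a_0 b_1)=(a_1 w+a_0)(b_1' w+b_0')-(a_1' w+a_0')(b_1 w+b_0)$, whose right‑hand side has $w$‑degree $\le 2$ while the coefficient $a_1 b_0-a_0 b_1$ of $R$ is $w$‑free and (since $\Phi$ is nonconstant in $w$) nonzero; hence $\deg_w(R)\le 2$, contradicting $\deg_w(R)\ge 3$. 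So once more the solution variety is finite, and one may take $B_1=(n+1)^{2D+1}$ and $B_2=(n+1)^{2D'+1}$. The main difficulty is ruling out positive‑dimensional families: in the difference case this is the genuinely new ingredient, and it is precisely here that $\deg_w(R)\ge 2$ (rather than $\ge 3$) is enough — the threshold $\deg_w(R)=2$ being, as for Malmquist's theorem, the Riccati case; pinning down the explicit constant $C(m,n)$, and thus $B_1,B_2$, is the other place calling for care.
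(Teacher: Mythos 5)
Your skeleton (an a priori degree bound, then treating bounded-degree solutions as an algebraic subset of projective space and counting by B\'ezout once it is zero-dimensional) matches the paper's, and your step (i) is essentially Lemma~\ref{lem:degree} recast in function-field-height language (you would still need to make $C(m,n)$ explicit, as the paper does, to get explicit $B_1,B_2$). Where you genuinely diverge is the crucial step of excluding positive-dimensional families. The paper does this arithmetically: an arithmetic analogue of the degree bound (Lemma~\ref{lem:height}, together with Lemma~\ref{lem:subs}) shows solutions have bounded Weil height, Northcott (Lemma~\ref{lem:northcott}) then kills positive-dimensional components when all coefficients are algebraic, and an induction on transcendence degree via specialization handles arbitrary complex coefficients; this is also the source of the paper's effective search space. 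You instead argue, for~\eqref{eq:eq}, purely field-theoretically: the generic point of a curve of solutions gives $f$ transcendental over $\CC(z)$ (this is correct, most cleanly because $\CC(z)$ is algebraically closed in $\CC(C_0)(z)$), and the comparison $[M:\sigma E]=n[M:E]$ versus $[M:\sigma E]=[M:E]$ for your $\sigma$-stable finite extension $M$ forces $n=1$ -- a sound and attractive argument that avoids heights entirely. For~\eqref{eq:diff} your Darboux--Jouanolou argument (infinitely many invariant graphs force a rational first integral, reducible to one of $w$-degree $\le 1$, whence $\deg_w(R)\le 2$) is also essentially sound, modulo the standard reduction to a primitive first integral with irreducible, reduced generic fibre which you only gesture at; note it uses only that $R$ is not a $w$-polynomial of degree $\le 2$, which bears on the paper's first Question, though your degree bound in step (i) still requires $\deg_w(R)\ge 3$. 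What the paper's route buys is an explicit height-bounded search space over $\overline{\QQ}$; what yours buys is a softer finiteness argument whose effectivity rests instead on elimination for a zero-dimensional system.

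The one step that fails as written is the reduction to your variety $W\subseteq\PP^{2D+1}$. Since you do not require $p$ and $q$ to be coprime (equivalently, you neither fix the exact degree nor remove the resultant hypersurface), $W$ really can have positive-dimensional components carrying no new solutions: if $f=p/q$ is any solution of degree strictly less than $D$ and $g$ runs over nonconstant polynomials, the points with coordinates the coefficients of $(pg,qg)$ all satisfy the cleared-denominator equations, because the common factor $g(z+1)g(z)^{d}$ pulls out. So ``$W$ zero-dimensional'' is \emph{not} equivalent to the absence of a positive-dimensional family of solutions, your curve argument does not apply to such components (their generic point gives $f\in\CC(z)$, so no transcendence and no contradiction), and the count $|W|\le (n+1)^{2D+1}$ is not available. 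The repair is exactly the paper's framework: stratify by exact degree $k\le D$ and work in $\operatorname{Hom}_k$, the complement of $\Res(\mathbf{c})=0$ in $\PP^{2k+1}$, where distinct points are distinct solutions and the generic point of any positive-dimensional component of the solution locus is automatically not a $\CC$-point; your index argument (resp.\ first-integral argument) then applies verbatim, and the refined B\'ezout bound applied degree by degree and summed over $k\le 3\deg_z(R)$ (resp.\ $5\deg_z(R)$) yields constants of the same shape as~\eqref{eq:B}.
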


Indeed, we show that we may take
\begin{equation}\label{eq:B}B_x=(\deg_w(R)+x)^{\deg_z(R)}\frac{(\deg_w(R)+x)^{(\deg_w(R)+x)(3\deg_z(R)+1)}-1}{(\deg_w(R)+x)^{(\deg_w(R)+x)}-1},\end{equation}
although these are surely not optimal bounds, and refinements here would be of interest.

Our proof proceeds roughly as follows, focusing on the difference equation case. First, we bound the degree of a rational function $f$ solving~\eqref{eq:eq}. This is similar in flavour to the argument behind Yanagihara's result, which makes estimates on the Nevanlinna characteristic of a putative solution, but once restricted to the setting of rational functions we can do this by purely elementary means. We then show that the rational solutions to~\eqref{eq:eq} of a given degree correspond to an algebraic subset of some projective space, whose irreducible components have degree summing to at most some bound which depends only on the degrees of $R$ in the two variables. This would prove the result, but for the possibility that some of these irreducible components have positive dimension.

Changing gears, we use an arithmetic argument (a height bound) to show that, if $R$ and the solutions to ~\eqref{eq:eq} in question happen to all have algebraic coefficients, then the Zariski closure of this set of solutions cannot have any components of positive dimension. This is then the base case of an induction on transcendence rank, which proves the result over any finitely-generated subfield of $\CC$. Since $R$ and any finite collection of solutions to~\eqref{eq:eq} can be defined over some finitely-generated extension, this resolves the general case. The crux of the induction is essentially that we may replace some transcendental values appearing in the coefficients of our various rational functions with values from some subfield, in such a way as to preserve~\eqref{eq:eq}. The consideration of the heights of the coefficients of solutions seems to be novel here, although Eremenko's arguments in~\cite{MR1601867} use the related function-field height.

While our results produce an effectively computable finite set, the computations involved are not necessarily practical, even in simple cases. For instance,  consider the following variation of an example of Yanagihara:
\[f(z+1)=f(z)+1+\frac{2z^3}{f(z)}.\]
 Then it is a consequence of the various lemmas below than any rational solution  has degree at most 9, and any solution in $\QQ(z)$ can be written with integer coefficients of absolute value at most $8.2\times 10^{40}$, which gives a finite search space, but one too large to exhaust. (And, \emph{a priori}, these might not be all solutions, as our proof provides here only that the coefficients will be algebraic of degree at most $8\times 10^{12}$.)
Meanwhile, for this example an elementary consideration of the zeros and poles of a putative solution show that we can have only $f(z)=z^2$. 
  %In particular, if $f$ has a pole at $z=\alpha$, then it also has a pole at $z=\alpha-1$, and so $f$ must be a polynomial. On the other hand, if $f$ has a zero at $z=\alpha\neq 0$ (or a zero at $z=0$ of order 4 or more), then $f$ has a pole at $z=\alpha-1$, and so $f(z)=c z^k$ for some $c\in \CC$ and $0\leq k\leq 3$. One checks that only $k=2$ and $c=1$ work.

We also note that while we prove bounds on the size of the set of solutions below when $R$ has coefficients in $\CC$, we only discuss computation of the finite set of solutions (which depends on the theory of heights) when $R$ has coefficients in $\overline{\QQ}$. This is only for simplicity, though, and if $R$ has transcendental coefficients, one may simply appeal to the (somewhat more complicated) theory of heights over finitely generated extensions of $\QQ$ due to Moriwaki~\cite{MR1779799}.

This paper raises a few questions, and we mention two here for future consideration. First, an anonymous referee proposes the following.
\begin{question}
In analogy with the statement of Malmquist's Theorem, is it true that the number of solutions to~\eqref{eq:diff} is bounded as in Theorem~\ref{th:finite} whenever $R$ is not linear or a quadratic polynomial in $w$?
\end{question}
The second question is motivated by Remark~\ref{rm:gendiff} below.
\begin{question}
Given an factional linear transformation $\sigma(z)\in\CC(z)$, do there exist any finite-order meromorphic solutions to $f\circ \sigma(z)=R(z, f(z))$ other than rational solutions?
\end{question}
Note that if one restricts $\sigma$ to be a affine linear transformation, then one can make some progress by combining Yanagihara's proof from~\cite{MR621536} with an estimate of Bergweiler~\cite{MR1257098} on the characteristic of a composition of functions, but even this appears to give a much weaker result.

\section{Degrees of solutions}

Our first lemma is a standard result on the elimination of variables. For the rest of the paper we set $d=\deg_w(R)$. In general, if $F$ is a polynomial in several variables, $\deg(F)$ will mean the \emph{total degree} of $F$.
\begin{lemma}\label{lem:res}
Given $P, Q\in \CC[z, X_1, X_0]$, homogeneous of degree $d$ in $X_1$ and $X_0$ and with no common factor, there exists a non-zero $\Res(P, Q)\in\CC[z]$ and  $A_i, B_i\in \CC[z, X_1, X_0]$, homogeneous  of degree $d-1$ in the $X_i$, with
\begin{equation}\label{eq:resultant}\Res(P, Q)X_i^{2d-1}=A_i(X_1, X_0)P(X_1, X_0)+B_i(X_1, X_0)Q(X_1, X_0)\end{equation}
for $i=0, 1$. 	Furthermore, $\Res(P, Q)$ is the determinant of some $2d\times 2d$ matrix with entries which are coefficients of $P$ and $Q$, and every coefficient of $A_i$ and $B_i$ is the determinant of some $(2d-1)\times (2d-1)$ matrix, with entries again coefficients of $P$ and $Q$.
\end{lemma}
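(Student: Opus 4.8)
This statement is the classical Sylvester resultant for binary forms, now with coefficients in the ring $\CC[z]$, so the plan is to carry out the standard construction and then read off the three asserted properties (nonvanishing, the shape of the cofactors, and the B\'ezout-type identity \eqref{eq:resultant}). Concretely, for each integer $e\geq 0$ let $V_e$ denote the free $\CC[z]$-module of polynomials in $X_1, X_0$ that are homogeneous of degree $e$; with the monomial basis $X_1^{e}, X_1^{e-1}X_0,\dots, X_0^{e}$ it has rank $e+1$. Multiplication by $P$ and $Q$ assembles into a $\CC[z]$-linear map
\[\phi\colon V_{d-1}\oplus V_{d-1}\longrightarrow V_{2d-1},\qquad (A,B)\longmapsto AP+BQ,\]
between free $\CC[z]$-modules of equal rank $2d$. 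Writing $\phi$ in the monomial bases produces a $2d\times 2d$ matrix $S$ -- the Sylvester matrix of the pair $(P,Q)$ -- each column of which consists of the coefficients of $P$, resp.\ of $Q$, suitably shifted and padded with zeros. I would define $\Res(P,Q)=\det S\in\CC[z]$, which already has exactly the form claimed.

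The one point that requires an argument is that $\Res(P,Q)\neq 0$. Extending scalars to $\CC(z)$, it is enough to show $\phi$ becomes an isomorphism there, and since source and target have the same finite dimension, injectivity suffices. First, coprimality of $P$ and $Q$ in the UFD $\CC[z,X_1,X_0]$ descends, via Gauss's lemma applied to primitive parts, to coprimality in $\CC(z)[X_1,X_0]$: a common non-unit factor over $\CC(z)$, rescaled to a primitive polynomial, would be a common factor over $\CC[z]$. Now if $AP+BQ=0$ with $A,B$ homogeneous of degree $d-1$ over $\CC(z)$ and not both zero, then $Q\mid AP$ in $\CC(z)[X_1,X_0]$, hence $Q\mid A$ by coprimality; but $A$ has total $X$-degree at most $d-1<d$, the total $X$-degree of $Q$, so $A=0$, whence $BQ=0$ and $B=0$, a contradiction. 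Thus $\phi\otimes_{\CC[z]}\CC(z)$ is an isomorphism and $\det S\neq 0$.

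It then remains to exhibit $A_i, B_i$. Because $\phi\otimes\CC(z)$ is invertible and the target element $X_i^{2d-1}$ is itself one of the chosen basis monomials, Cramer's rule expresses the unique solution of $\phi(A,B)=X_i^{2d-1}$ over $\CC(z)$ with each coordinate of the form $\pm(\text{minor of }S)/\det S$, the minors being determinants of the $(2d-1)\times(2d-1)$ matrices obtained by deleting one row and one column of $S$. Clearing the denominator, the pair $(A_i,B_i)$ obtained from this solution by multiplying through by $\det S$ lies in $V_{d-1}\oplus V_{d-1}$ over $\CC[z]$ -- so $A_i$ and $B_i$ are automatically homogeneous of degree $d-1$ in the $X_i$ -- its coordinates are precisely those $(2d-1)\times(2d-1)$ determinants in the coefficients of $P$ and $Q$, and by construction $A_iP+B_iQ=\Res(P,Q)X_i^{2d-1}$, which is \eqref{eq:resultant}. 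The only genuinely delicate step is the nonvanishing of $\Res(P,Q)$, i.e.\ the correct use of Gauss's lemma to transfer coprimality from $\CC[z,X_1,X_0]$ to $\CC(z)[X_1,X_0]$; everything else is bookkeeping with the Sylvester matrix.
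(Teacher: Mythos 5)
Your proposal is correct and follows essentially the same route as the paper: interpret $(A,B)\mapsto AP+BQ$ as a $\CC[z]$-linear map between rank-$2d$ modules of forms, define $\Res(P,Q)$ as the determinant of its Sylvester matrix, and extract $A_i, B_i$ via Cramer's rule, with the cofactor description of their coefficients. The only cosmetic difference is in proving $\Res(P,Q)\neq 0$: you argue injectivity over $\CC(z)$ using Gauss's lemma and a divisibility/degree count, while the paper phrases the same coprimality obstruction contrapositively (a nontrivial kernel element would express $-P/Q$ by forms of degree at most $d-1$, forcing a common factor).
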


\begin{proof}
This is a standard result, which we sketch here for completeness (see~\cite[Lemma~1]{defects} for more details). Writing $H_d$ for the $\CC[z]$-module of homogeneous forms of degree $d$ in $X_1$ and $X_0$, note that
\begin{equation}\label{eq:linmap}(A, B)\mapsto AP+BQ\end{equation}
is a linear map from $H_{d-1}\times H_{d-1}$ to $H_{2d-1}$. We define $\Res(P, Q)$ to be the determinant of the Sylvester matrix of $P$ and $Q$, which is the the coordinate matrix of~\eqref{eq:linmap} (relative to the natural bases). If $\Res(P, Q)\neq 0$, then it follows from Cramer's Rule that we can solve~\eqref{eq:resultant}, and that the coefficients of the solutions will be determinants of the Sylvester matrix with certain columns replaced by the standard basis vector representing $X_i^{2d-1}$. In the case that $\Res(P, Q)=0$, the kernel of~\eqref{eq:linmap} will contain a nontrivial element, which corresponds to a pair $(A, B)$ such that $A/B$ is a rational function of degree at most $d-1$, and is equal to $-P/Q$. This means that $P$ and $Q$ have a common factor.
\end{proof}

We now give a lower bound on the degree of a certain composition of rational functions. 
For the rest of the paper, we represent $R$ as
\[R(z, X/Y)=\frac{P(X, Y)}{Q(X, Y)}\]
for homogeneous forms $P, Q$ of degree $d$ with no common factor, and coefficients in $\CC[z]$.

We begin with an explicit estimate on the degree of a rational function of a rational function with polynomial coefficients.
\begin{lemma}\label{lem:degree} With $d\geq 2$, we have
\[d\deg(f)\leq \deg(R(z, f(z)))+(2d-1)\deg_z(R).\]
\end{lemma}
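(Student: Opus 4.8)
The plan is to write $R(z,f(z))$ as an explicit ratio of two polynomials, control the spurious common factor of those two polynomials using the resultant identity of Lemma~\ref{lem:res}, and then read off the degree. Write $f=X_1/X_0$ with $X_1,X_0\in\CC[z]$ coprime, so $e:=\deg(f)=\max(\deg X_1,\deg X_0)$ and $R(z,f(z))=P(X_1,X_0)/Q(X_1,X_0)$ with numerator and denominator in $\CC[z]$. The degree of $R(z,f(z))$, as a reduced fraction, is the common ``top degree'' of numerator and denominator minus the degree of their greatest common divisor, so the two quantities I need to control are that top degree and that gcd --- and both become cleanest after passing to $\PP^1$.

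Accordingly, put $\delta:=\deg_z(R)$, regard $f$ as a morphism $\PP^1\to\PP^1$ given by coprime forms $\widehat X_1,\widehat X_0\in\CC[z_1,z_0]$ of degree $e$, and homogenize the $\CC[z]$-coefficients of $P$ and $Q$ to degree $\delta$, obtaining $\widehat P,\widehat Q\in\CC[z_1,z_0][X_1,X_0]$ that are bihomogeneous of bidegree $(\delta,d)$ and, since $P$ and $Q$ have none, still have no common factor. Then $\widehat N:=\widehat P(\widehat X_1,\widehat X_0)$ and $\widehat D:=\widehat Q(\widehat X_1,\widehat X_0)$ are homogeneous of degree $de+\delta$ and not both zero, $R(z,f(z))$ is the rational map $[\widehat N:\widehat D]$, and a short computation --- valid also in the degenerate cases where $\widehat N$ or $\widehat D$ vanishes identically and $R(z,f(z))$ is constant --- gives $\deg(R(z,f(z)))=(de+\delta)-\deg\gcd(\widehat N,\widehat D)$. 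So the lemma follows once I show $\deg\gcd(\widehat N,\widehat D)\le 2d\delta$.

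For this I apply Lemma~\ref{lem:res} to $\widehat P,\widehat Q$ (over the ring $\CC[z_1,z_0]$, which changes nothing in that lemma). It produces a nonzero $\Res(\widehat P,\widehat Q)\in\CC[z_1,z_0]$ which, being the determinant of a $2d\times 2d$ matrix whose nonzero entries are coefficient forms of $\widehat P$ and $\widehat Q$, each homogeneous of degree $\delta$, is itself homogeneous of degree $2d\delta$; and it produces identities $\Res(\widehat P,\widehat Q)\,X_i^{2d-1}=A_i\widehat P+B_i\widehat Q$ for $i=0,1$. Substituting $(X_1,X_0)=(\widehat X_1,\widehat X_0)$ into these shows that $\gcd(\widehat N,\widehat D)$ divides both $\Res(\widehat P,\widehat Q)\,\widehat X_1^{2d-1}$ and $\Res(\widehat P,\widehat Q)\,\widehat X_0^{2d-1}$; since $\widehat X_1$ and $\widehat X_0$ are coprime, it follows that $\gcd(\widehat N,\widehat D)\mid\Res(\widehat P,\widehat Q)$, hence $\deg\gcd(\widehat N,\widehat D)\le 2d\delta$, and therefore $\deg(R(z,f(z)))\ge(de+\delta)-2d\delta=d\deg(f)-(2d-1)\deg_z(R)$.

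I do not expect any single step to be a real obstacle; the point requiring genuine care is the bookkeeping near $z=\infty$. The ``expected'' numerator degree $d\deg(f)$ can genuinely drop, because of cancellation among the leading coefficients of $P(X_1,X_0)$, so one cannot bound $\deg P(X_1,X_0)$ from below and then peel off the gcd as a separate step. Homogenizing in $z$ is exactly what folds this loss into the $z_0$-part of $\gcd(\widehat N,\widehat D)$, so that the single resultant estimate $\deg\gcd(\widehat N,\widehat D)\le 2d\delta$ absorbs it; this is why I would run the whole argument on $\PP^1$ rather than manipulate polynomials in $z$ directly.
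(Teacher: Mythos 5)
Your proof is correct, and it reaches the paper's exact error term $(2d-1)\deg_z(R)$ by a cleaner route than the one taken in the text, though the key tool is the same. The paper works inhomogeneously in $z$: it evaluates the identity of Lemma~\ref{lem:res} at $(f_1,f_0)$, bounds $\deg A_i(f_1,f_0)$ and $\deg B_i(f_1,f_0)$ using the explicit description of their coefficients as $(2d-1)\times(2d-1)$ determinants in the coefficients of $P$ and $Q$, compares degrees on both sides of $\Res(P,Q)f_i^{2d-1}=A_iP(f_1,f_0)+B_iQ(f_1,f_0)$, and then uses, as you do, that the common factor of $P(f_1,f_0)$ and $Q(f_1,f_0)$ divides $\Res(P,Q)$. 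You instead homogenize in $z$ and count degrees on $\PP^1$: then $\widehat N,\widehat D$ have exact degree $d\deg(f)+\deg_z(R)$, all cancellation (including the drop at $z=\infty$ that the paper's coefficient estimates absorb) is concentrated in $\gcd(\widehat N,\widehat D)$, and the single divisibility $\gcd(\widehat N,\widehat D)\mid\Res(\widehat P,\widehat Q)$ together with the homogeneity of the $2d\times 2d$ Sylvester determinant (degree exactly $2d\deg_z(R)$) finishes the argument; you never need the determinant bounds on the coefficients of $A_i,B_i$, only their existence. Your approach buys a tidier accounting at infinity; the paper's inhomogeneous version has the advantage that the same coefficient-level estimates are reused verbatim in the arithmetic analogue (Lemma~\ref{lem:height}), which is presumably why it is phrased that way. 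One small point to make explicit: your assertion that $\widehat P,\widehat Q$ "still have no common factor" after homogenization needs the observation that $z_0$ cannot divide both, which holds because $\deg_z(R)$ is by definition attained by some coefficient of $P$ or $Q$, so that coefficient's homogenization to degree $\deg_z(R)$ is not divisible by $z_0$; without this, $\Res(\widehat P,\widehat Q)$ could vanish and Lemma~\ref{lem:res} would not apply. That is a one-line fix, not a gap in the strategy.
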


 Note that an estimate of this form already follows from a result of Monhon'ko~\cite{MR0298006}, without an explicit error term. Writing $T_r(f)$ for the Nevanlinna charateristic function of $f$, then $T_r(f)=\deg(f)\log r+O(1)$ as $r\to\infty$. The main result of\cite{MR0298006}, is that
 \[dT_r(f)=T_r(R(z, f(z)))+O\left(\sum T_r(c_i)\right)\]
  where the $c_i$ are the coefficients of $R$, from which we deduce that $d\deg(f)=\deg(R(z, f(z)))+O(\deg_z(R))$, where the implied constant depends on $d$.

\begin{proof}[Proof of Lemma~\ref{lem:degree}]
Set $f=f_1/f_0$, where $f_1$ and $f_0$ are polynomials with no common factor, and write
\begin{equation}
\label{eq:res}	
\Res(P, Q)f_i^{2d-1}=A_i(f_1, f_0)P(f_1, f_0)+B_i(f_1, f_0)Q(f_1, f_0)
\end{equation}
as in Lemma~\ref{lem:res}.
Now, each $A_i(X, Y)$ has degree $d-1$ and coefficients which are determinants of $(2d-1)\times (2d-1)$ matrices, whose entries are coefficients of $P$ and $Q$. It follows that
\begin{align*}
\deg(A_i(f_1, f_0))&\leq (d-1)\max\{\deg(f_1), \deg(f_0)\} + (2d-1)\deg_z(R)\\
&=(d-1)\deg(f)+(2d-1)\deg_z(R),
\end{align*}
and similarly for $B_i$. So we have from~\eqref{eq:res} that
\begin{align*}
\deg(\Res(P, Q))+(2d-1)\deg(f)&= \max_{i=0, 1}\deg(\Res(P, Q)f_i^{2d-1})\\
&\leq \max_{i=0, 1}\deg\big(A_i(f_1, f_0)P(f_1, f_0)\\&\quad +B_i(f_1, f_0)Q(f_1, f_0)\big)\\
&\leq   \max\{\deg(P(f_1, f_0)), \deg(Q(f_1, f_0))\}\\&\quad+(d-1)\deg(f)+(2d-1)\deg_z(R).
\end{align*}

On the other hand, any common factor in $\CC[z]$ of $P(f_1, f_0)$ and $Q(f_1, f_0)$ must divide $\Res(P, Q)$, and so rearranging the above gives
\begin{align*} d\deg(f) &\leq  \max\{\deg(P(f_1, f_0)), \deg(Q(f_1, f_0))\}-\deg(\Res(P, Q))\\&\quad+(2d-1)\deg_z(R)\\
	&\leq  \deg\left(\frac{P(f_1, f_0)}{Q(f_1, f_0)}\right)+(2d-1)\deg_z(R)\\
	&=\deg\big(R(z, f(z))\big)+(2d-1)\deg_z(R).
\end{align*}
\end{proof}

Note that it is not hard to construct examples in which we see that some error term in Lemma~\ref{lem:degree} is necessary, although it is not clear how sharp the estimate is. For instance, if $R(z, w)=w^d-z^{dm}$, then $R(z, z^m)=0$, showing that we cannot replace the factor of $2d-1$ in the error term by anything less than 1.

Our next lemma restricts the degree of a solution to~\eqref{eq:eq}, making the form of a hypothetical solution more concrete. In the difference-equation context, this lemma can be obtained from Yanagihara's argument, by making the error term in a result of Valiron~\cite{MR1504970} more explicit, but the previous lemma is already enough. For the context of~\eqref{eq:diff}, the argument is similar to that of Eremenko~\cite{MR1601867}, with the previous lemma doing most of the work.

\begin{lemma}\begin{enumerate}
\item If $f$ satisfies~\eqref{eq:eq} and $d\geq 2$, then $\deg(f)\leq 3\deg_z(R)$. 
\item If $f$ satisfies~\eqref{eq:diff} and $d\geq 3$, then $\deg(f)\leq 5\deg_z(R)$	
\end{enumerate}
\end{lemma}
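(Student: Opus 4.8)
The plan is to use Lemma~\ref{lem:degree} together with an elementary analysis of how the degree of $R(z,f(z))$ compares to $\deg(f)$ when $f$ satisfies the relevant functional equation. For the difference case~\eqref{eq:eq}, note that $R(z,f(z))=f(z+1)$, and translation by $1$ preserves the degree of a rational function, so $\deg(R(z,f(z)))=\deg(f(z+1))=\deg(f)$. Plugging this into Lemma~\ref{lem:degree} gives $d\deg(f)\leq \deg(f)+(2d-1)\deg_z(R)$, hence $(d-1)\deg(f)\leq (2d-1)\deg_z(R)$. Since $d\geq 2$ we have $\frac{2d-1}{d-1}\leq 3$, so $\deg(f)\leq 3\deg_z(R)$, as claimed.

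For the differential case~\eqref{eq:diff}, the analogous input is that differentiation can only drop the degree by a controlled amount: if $g=g_1/g_0$ in lowest terms, then $g'=(g_1'g_0-g_1g_0')/g_0^2$, and after cancellation one has $\deg(g')\leq 2\deg(g)$ (the numerator $g_1'g_0-g_1g_0'$ has degree at most $2\deg(g)-1$ against a denominator $g_0^2$ of degree $2\deg(g_0)$; the bound $\deg(g')\le 2\deg(g)$ holds in all cases, with the worst case occurring when $g_0$ is constant and then in fact $\deg(g')\le \deg(g)-1$, but we only need the crude bound). Applying this with $g=f$ and using $f'=R(z,f(z))$ gives $\deg(R(z,f(z)))=\deg(f')\leq 2\deg(f)$. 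Substituting into Lemma~\ref{lem:degree} yields $d\deg(f)\leq 2\deg(f)+(2d-1)\deg_z(R)$, so $(d-2)\deg(f)\leq(2d-1)\deg_z(R)$. For $d\geq 3$ we have $\frac{2d-1}{d-2}\leq 5$, giving $\deg(f)\leq 5\deg_z(R)$.

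The only genuinely delicate point is getting the constant in the bound $\deg(g')\leq 2\deg(g)$ right — one must be careful about whether $g$ is a polynomial or a proper rational function and about possible cancellation between $g_1'g_0-g_1g_0'$ and $g_0^2$, so I would verify both the $\deg(g_0)=0$ and $\deg(g_0)>0$ cases explicitly. This is why the differential case needs $d\geq 3$ rather than $d\geq 2$: the coefficient $d-2$ must be positive for the rearrangement to produce an upper bound. Everything else is a one-line substitution into the previous lemma; no new ideas are required beyond the degree-of-derivative estimate and the degree-invariance of translation.
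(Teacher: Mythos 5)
Your proof is correct and takes essentially the same route as the paper: apply Lemma~\ref{lem:degree}, using $\deg(f(z+1))=\deg(f)$ for the difference equation and the derivative degree bound for the differential equation, then rearrange and use $\frac{2d-1}{d-1}\le 3$ for $d\ge 2$, respectively $\frac{2d-1}{d-2}\le 5$ for $d\ge 3$. The only cosmetic difference is that the paper quotes $\deg(f')\le 2\deg(f)-2$ from the quotient rule before immediately relaxing it to $2\deg(f)$, whereas you work with the crude bound $\deg(f')\le 2\deg(f)$ directly, which is the inequality actually needed (and is the safer general statement, since cancellation against $f_0^2$ need not occur when $f_0$ is squarefree).
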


\begin{proof}
For the first claim note that it follows from Lemma~\ref{lem:degree} that for $f$ satisfying~\eqref{eq:eq},
\begin{align*}
d\deg(f)&\leq \deg\big(R(z, f(z))\big)	+(2d-1)\deg_z(R)\\
&=\deg(f(z+1))	+(2d-1)\deg_z(R)\\
&=\deg(f)+(2d-1)\deg_z(R),
\end{align*}
whence
\[\deg(f)\leq \frac{2d-1}{d-1}\deg_z(R)\leq 3\deg_z(R).\]
On the other hand, $\deg(f')\leq 2\deg(f)-2$, by the quotient rule, and so from Lemma~\ref{lem:degree} solutions to~\eqref{eq:diff} have
\[d\deg(f)\leq 2\deg(f)-2+(2d-1)\deg_z(R)\leq 2\deg(f)+(2d-1)\deg_z(R),\]
whence
\[\deg(f)\leq \frac{2d-1}{d-2}\deg_z(R)\leq 5\deg_z(R)\]
for $d\geq 3$.
\end{proof}

\section{Solutions of a given degree}

We now focus on solutions to~\eqref{eq:eq} of fixed degree. A rational function of degree $k$
\begin{equation}\label{eq:ratd}f(z)=\frac{c_0+\cdots +c_kz^k}{c_{k+1}+\cdots +c_{2k+1}z^k}\end{equation}
can be identified with the point in projective space $\mathbf{c}=[c_0:\cdots :c_{2k+1}]\in \PP^{2k+1}$, but not every point in $\PP^{2k+1}$ gives a rational function of the right degree. In particular, the resultant of the numerator and denominator in~\eqref{eq:ratd} is a homogeneous form $\Res(\mathbf{c})$ in the coordinates of $\PP^{2k+1}$ of degree $2k$, and rational functions of degree exactly $k$ correspond to points on $\operatorname{Hom}_k\subseteq \PP^{2k+1}$, the complement of the  hypersurface defined by $\Res(\mathbf{c})=0$.

Our next lemma requires the machinery of heights. Let $K$ be a number field, and let $|\cdot|_v$ be an absolute value on $K$ whose restriction to $\QQ$ is either the usual absolute value, or a $p$-adic absolute value. The set of such $v$ will be denoted by $M_K$. For a point $P=[P_0:\cdots :P_N]\in\PP^N(K)$, we define the \emph{logarithmic Weil height} $h(P)$ by
\begin{equation}\label{eq:height}h(P)=\sum_{v\in M_K}\frac{[K_v:\QQ_v]}{[K:\QQ]}\log\max\{|P_0|_v, ..., |P_N|_v\},\end{equation}
where $K_v$ is the completion of $K$ with respect to $|\cdot|_v$.
It is a standard result (see, e.g., \cite[p.~176]{MR1745599}) that $h(P)$ is independent both of the choice of homogeneous coordinates representing $P$, and of the field $K$. That is, $h$ is a non-negative, well-defined function on $\PP^{N}(\overline{K})$. We will write $h(f)$ for the height of the tuple of coefficients of $f(z)\in \overline{\QQ}(z)$, when $f$ is written as in~\eqref{eq:ratd}, as a quotient of polynomials with no common factor.

The main utility of heights, for our purposes, will be the following finiteness result (see, e.g., \cite[Theorem~B.2.3, p.~177]{MR1745599}).
\begin{lemma}[Northcott~\cite{MR34607}]\label{lem:northcott}
	For any finite $B$ and $D$, the set of points $P\in \PP^N(\overline{\QQ})$ with $h(P)\leq B$ defined over number fields of degree at most $D$ is finite and effectively computable.
\end{lemma}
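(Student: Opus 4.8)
The plan is to reduce the statement to the one-dimensional case --- finiteness of the set of algebraic numbers of bounded degree and bounded height --- and then to settle that case by bounding the coefficients of minimal polynomials.

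First I would pass to coordinates. Given $P=[P_0:\cdots:P_N]\in\PP^N(\overline{\QQ})$ with $h(P)\le B$, defined over a number field $K$ with $[K:\QQ]\le D$, choose an index $j$ with $P_j\neq 0$ and rescale the homogeneous coordinates so that $P_j=1$. Then every $P_i\in K$ is an algebraic number of degree at most $D$, and comparing~\eqref{eq:height} for $[P_i:1]$ with~\eqref{eq:height} for $P$ place by place --- the maximum of a sub-tuple of absolute values being at most the maximum over the full tuple --- gives $h(P_i)=h([P_i:1])\le h(P)\le B$. Since $P$ is recovered from $(P_0,\dots,P_N)$, it suffices to prove that
\[S=\{\alpha\in\overline{\QQ}:\ [\QQ(\alpha):\QQ]\le D,\ h(\alpha)\le B\}\]
is finite and effectively computable; granting this, the number of admissible $P$ is at most $(N+1)\,\lvert S\rvert^{N+1}$, and the admissible $P$ can be listed by running over the choice of $j$ and over $(N+1)$-tuples drawn from $S$ and discarding those of height exceeding $B$ or with all entries zero.

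To control $S$, let $\alpha\in S$ have exact degree $e\le D$ and let $g(T)=a_eT^e+\cdots+a_0\in\ZZ[T]$ be a primitive integral polynomial with $g(\alpha)=0$, with complex roots $\alpha^{(1)},\dots,\alpha^{(e)}$. A standard computation --- splitting the sum~\eqref{eq:height} for $\alpha$ over the places of $\QQ(\alpha)$ lying above each place of $\QQ$, using Gauss's lemma at the finite places to identify the non-archimedean contribution as exactly $\log\lvert a_e\rvert$, and grouping the archimedean places by the complex embeddings --- yields
\[e\,h(\alpha)=\log\Big(\lvert a_e\rvert\,\textstyle\prod_{k=1}^{e}\max(1,\lvert\alpha^{(k)}\rvert)\Big),\]
the logarithmic Mahler measure of $g$. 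Since $a_{e-m}=\pm a_e$ times the $m$-th elementary symmetric function of the $\alpha^{(k)}$, each such function being a sum of $\binom{e}{m}$ monomials of absolute value at most $\prod_k\max(1,\lvert\alpha^{(k)}\rvert)$, we obtain $\lvert a_{e-m}\rvert\le\binom{e}{m}\lvert a_e\rvert\prod_k\max(1,\lvert\alpha^{(k)}\rvert)\le 2^{D}\exp(DB)$ for all $m$.

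Hence every element of $S$ is a root of some polynomial in $\ZZ[T]$ of degree at most $D$ all of whose coefficients are bounded in absolute value by $2^{D}\exp(DB)$. There are only finitely many such polynomials, and each has at most $D$ roots, so $S$ is finite; moreover $S$ is effectively computable, since one may enumerate these polynomials, factor each over $\QQ$, and retain the roots whose degree is at most $D$ and whose height is at most $B$. The only step that is not a purely formal manipulation is the identity relating $h(\alpha)$ to the Mahler measure of $g$: one must keep track of the chosen normalization of $g$ (the primitive integral generator, which supplies the factor $\lvert a_e\rvert$) and handle the non-archimedean places correctly via Gauss's lemma. This bookkeeping --- rather than any genuine difficulty --- is the main obstacle.
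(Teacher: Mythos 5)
Your argument is correct. Note, though, that the paper does not prove this lemma at all: it is quoted from Northcott with a pointer to the literature, and the only indication of proof given is the remark handling the case $D=1$ by clearing denominators to get coprime integer coordinates. What you have written is the standard textbook proof (essentially the one in the cited reference), and it is the natural generalization of that $D=1$ remark: where the paper normalizes a rational point by primitive integer coordinates, you normalize an algebraic number by its primitive integral minimal polynomial and invoke the identity $e\,h(\alpha)=\log\bigl(|a_e|\prod_k\max(1,|\alpha^{(k)}|)\bigr)$ (Gauss's lemma at the finite places, embeddings at the infinite ones), after which the elementary-symmetric-function bound $|a_{e-m}|\le 2^{D}e^{DB}$ reduces everything to a finite, enumerable list of integer polynomials. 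The reduction from $\PP^N$ to the one-variable case by dehomogenizing at a nonzero coordinate is also fine, since $\max\{|P_i|_v,1\}\le\max_k|P_k|_v$ once some coordinate equals $1$. Two minor points of bookkeeping, neither a gap: your enumeration of $(N+1)$-tuples from $S$ produces a finite superset of the set in the statement, because a tuple whose entries each have degree at most $D$ need not be defined over a single field of degree at most $D$; if one wants the exact set one simply adds the (effective) check on the degree of the field generated by the normalized coordinates. And the verification $h(P)\le B$ for an enumerated candidate is effective via the same Mahler-measure formula, modulo the usual caveat about deciding equality with an arbitrary real bound $B$, which is no worse than in any statement of Northcott's theorem.
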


Northcott's Theorem need not be particularly mysterious, at least in the case $D=1$. If $P=[P_0:\cdots :P_N]\in \PP^N(\QQ)$, then by scaling the coordinates we may take  the $P_i$ to be integers not sharing a common factor. In this case, one checks from~\eqref{eq:height} that
\[h(P)=\log\max\{|P_0|, ..., |P_N|\},\]
and of course bounding this allows only finitely many choices for the $P_i$.

We will use heights to show finiteness in certain cases of the proof of the main result, via the next lemma, which is an arithmetic analogue of Lemma~\ref{lem:degree}.

\begin{lemma}\label{lem:height} 
Let $R(z, w)\in \overline{\QQ}(z, w)$ with $d=\deg_w(R)\geq 2$, and let $f(z)\in \overline{\QQ}(z)$. Then
\[dh(f)\leq h(R(z, f(z)))+O(1),\]
where the implied constant depends only on $R$ and $\deg(f)$.
\end{lemma}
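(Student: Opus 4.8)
The plan is to mimic the proof of Lemma~\ref{lem:degree}, replacing the degree function on $\CC[z]$ with the logarithmic Weil height and using the fact that the height of a quotient of two coprime polynomials is (up to a bounded error depending only on degrees) comparable to the height of the tuple of all their coefficients. Concretely, write $f=f_1/f_0$ with $f_1,f_0\in\overline{\QQ}[z]$ coprime. By the Gauss-lemma-type estimate comparing the height of a product of polynomials to the sum of the heights of the factors (the Mahler-measure inequality $h(gh)=h(g)+h(h)+O(\deg)$, valid over any number field), the quantity $h(f)$ agrees with $h([f_1:f_0])$, the height of the joint coefficient vector, up to $O(1)$ where the implied constant depends only on $\deg(f)$. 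The same remark applies to $R(z,f(z))=P(f_1,f_0)/Q(f_1,f_0)$ after clearing the common factor, which divides $\Res(P,Q)\in\overline{\QQ}[z]$, a polynomial of bounded degree and bounded height (depending only on $R$).

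The key step is then the arithmetic analogue of the resultant identity~\eqref{eq:res} from Lemma~\ref{lem:res}. Applying it with the $X_i$ specialized to $f_1,f_0$, we get
\[
\Res(P,Q)\,f_i^{2d-1}=A_i(f_1,f_0)P(f_1,f_0)+B_i(f_1,f_0)Q(f_1,f_0),\qquad i=0,1.
\]
Since the coefficients of $A_i,B_i$ are fixed (they are determinants of matrices of coefficients of $R$), the polynomials $A_i(f_1,f_0)$ and $B_i(f_1,f_0)$ have height at most $(d-1)h(f)+O(1)$, by the same product/sum height inequality and the triangle inequality for heights of sums. Using that the height of a sum is at most the sum of the heights plus $O(1)$, and that $h(\Res(P,Q))=O(1)$, one estimates
\[
(2d-1)h(f)+O(1)\le \max_i h\bigl(\Res(P,Q)f_i^{2d-1}\bigr)\le (d-1)h(f)+\max\{h(P(f_1,f_0)),h(Q(f_1,f_0))\}+O(1).
\]
Rearranging, and using once more that dividing $P(f_1,f_0)$ and $Q(f_1,f_0)$ by their gcd (a divisor of $\Res(P,Q)$, hence of bounded height) changes heights by only $O(1)$, gives
\[
d\,h(f)\le h\!\left(\frac{P(f_1,f_0)}{Q(f_1,f_0)}\right)+O(1)=h\bigl(R(z,f(z))\bigr)+O(1),
\]
as desired.

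The main obstacle is that the height, unlike the degree, does not behave exactly additively or subadditively under products, sums, and cancellation of common factors: each such operation introduces an error term, and one must check that every error term that arises depends only on $R$ and on $\deg(f)$ (equivalently on the number of polynomials and variables involved and their degrees), not on the coefficients of $f$ itself. The relevant tools are the inequalities relating the Weil height of a polynomial to its Mahler measure, namely $\bigl|\,h(g)+h(h)-h(gh)\,\bigr|=O(\deg g+\deg h)$ and $h(g+h)\le h(g)+h(h)+\log 2$, together with the bound $h(\gcd)\le h(\Res(P,Q))=O(1)$ for the spurious common factor; assembling these carefully is the only real content. One subtlety worth flagging is the degenerate case where $f$ has degree strictly less than its "nominal" degree $k$, i.e.\ where $\deg(f_1)$ or $\deg(f_0)$ drops; but this only affects which coordinates of the coefficient vector vanish and is absorbed into the $O(1)$ depending on $\deg(f)$.
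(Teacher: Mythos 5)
Your outline follows the same skeleton as the paper's proof (specialize the resultant identity of Lemma~\ref{lem:res} at $(f_1,f_0)$ as in~\eqref{eq:res}, note that the spurious common factor of $P(f_1,f_0)$ and $Q(f_1,f_0)$ divides $\Res(P,Q)$ and hence has bounded height, and control everything with Gauss/Gelfond--Mahler estimates), but the way you assemble the estimates has a genuine flaw. All the heights in play are \emph{projective} heights of coefficient vectors, and the comparison between the two members of a pair ($f_1$ versus $f_0$, or $P(f_1,f_0)$ versus $Q(f_1,f_0)$) can go different ways at different places $v$. Your displayed chain works with maxima of \emph{individual} polynomial heights, and its first inequality, $(2d-1)h(f)+O(1)\le \max_i h\bigl(\Res(P,Q)f_i^{2d-1}\bigr)$, is simply false: take $f_1=N$, $f_0=z$ for a large integer $N$, so $h(f)=\log N$, while both $N^{2d-1}\Res(P,Q)$ and $z^{2d-1}\Res(P,Q)$ have projective coefficient height equal to $h_{\operatorname{poly}}(\Res(P,Q))=O_R(1)$; the right-hand side does not see $N$ at all. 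The same scaling issue invalidates the auxiliary tool you invoke, ``$h(g+h)\le h(g)+h(h)+\log 2$'' (take $g=z$, $h=N$: both have projective height $0$ but $h(g+h)=\log N$); the correct sum inequality bounds $h_{\operatorname{poly}}(g+h)$ by the \emph{joint} height of the pair $(g,h)$, not by the sum of the individual heights.

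The repair is not routine bookkeeping of error constants, which is the only difficulty you flag: one must carry out every comparison locally, with the joint norms $\|f_1,f_0\|_v$ and $\|P(f_1,f_0),Q(f_1,f_0)\|_v$, choosing at each place the index $i$ where the maximum is attained (this index varies with $v$), and only sum over $v\in M_K$, weighted by local degrees, at the very end. Equivalently, the global statement must be phrased throughout in terms of joint heights of coefficient pairs such as $h([f_1:f_0])$ and $h\bigl([P(f_1,f_0):Q(f_1,f_0)]\bigr)$, and the lower bound coming from $\Res(P,Q)f_i^{2d-1}=A_i(f_1,f_0)P(f_1,f_0)+B_i(f_1,f_0)Q(f_1,f_0)$ still requires the place-by-place argument. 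This local-to-global structure is exactly the content of the paper's proof and is the idea missing from your write-up; your treatment of the common factor (a divisor of $\Res(P,Q)$, hence of height $O_R(1)$ by Gelfond's lemma) is fine and is a legitimate mild variant of the paper's use of $h_{\operatorname{poly}}(s)\ge 0$.
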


\begin{proof}
Let $K$ be some number field containing the coefficients of  $R$ and $f$, and let $|\cdot|_v$ be an absolute value on $K$. For a polynomial $F(x_1, ..., x_n)\in K[x_1, .., x_n]$ in however many variables, we set
\[\|F\|_v=\left\|\sum a_{i_1, ..., i_n}x_1^{i_1}\cdots x_n^{i_n}\right\|_v=\max|a_{i_1, ..., i_n}|_v.\]
We will also set
\[\|F_1, ..., F_m\|_v=\max\{\|F_1\|_v, ..., \|F_m\|_v\}.\]

If $|\cdot|_v$ is a non-archimedean absolute value, then 
\[\| F_1+\cdots+F_m\|_v\leq \| F_1,  ..., F_m\|_v\]
by the strong triangle inequality, while
\[\left\|\prod_{i=1}^mF_i\right\|_v=\prod_{i=1}^m\| F_i\|_v\]
by the Gau\ss\ Lemma~\cite[Lemma~1.6.3, p.~22]{MR2216774}.

In the case of an archimedean absolute value, the triangle inequality gives
\[\| F_1+\cdots+F_m\|_v\leq m\| F_1,  ..., F_m\|_v.\]
Somewhat less obviously, in this case we have, for polynomials in $n$ variables,
\[\prod_{i=1}^m \|F_i\|_v2^{-n\deg(F_i)}\leq \left\|\prod_{i=1}^m F_i\right\|_v \leq \prod_{i=1}^m \|F_i\|_v2^{n\deg(F_i)} \]
This is due to Mahler~\cite{MR138593}, or by applying Gelfond's Lemma~\cite[Lemma~1.6.11, p.~27]{MR2216774} and noting that the degrees of $F_i$ in each of the $n$ variables sum to at most $n\deg(F_i)$.

%which establishes
%\[m(F_i)-n\deg(F_i)\log 2\leq \log\|F_i\|_v\leq m(F_i)+n\deg(F_i)\log 2\]
%for $m(F_i)$ the \emph{logarithmic Mahler measure}
%\[m(F_i)=\int_{(S^1)^n} \log|F_i(z_1, ..., z_n)|d\mu(z_1, ..., z_n)\]
%(here $\mu$ is the Lebesgue measure on $n$ copies of the unit circle, normalized to have mass 1).
%Since Mahler measure is multiplicative, we have
%\begin{align*}
%\log\left\|\prod_{i=1}^m F_i\right\|_v&\leq  	m\left(\prod_{i=1}^m F_i\right)+n\deg\left(\prod_{i=1}^m F_i\right)\log 2\\
%&=\sum_{i=1}^m\left(m(F_i)+\deg(F_i)n\log 2\right)\\
%&\leq \sum_{i=1}^m \left(\log\|F_i\|_v+\deg(F_i)n2\log 2\right),
%\end{align*}
%and similarly for the reverse inequality.

For a positive integer $m$, we have
\[\log^+|m|_v:=\log\max\{|m|_v, 1\}=\begin{cases}
\log m & \text{if $v$ is archimedean,}\\
0 & \text{otherwise},
\end{cases}
\]
and so we consolidate the above inequalities into
\[\log\|F_1+\cdots +F_m\|_v\leq \log\|F_1, ..., F_m\|_v+\log^+|m|_v\]
and
\begin{multline*}
\sum_{i=1}^m (\log\|F_i\|_v-n\deg(F_i)\log^+|2|_v)\leq \log\left\|\prod_{i=1}^m F_i\right\|_v \\ \leq \sum_{i=1}^m (\log\|F_i\|_v+n\deg(F_i)\log^+|2|_v).	
\end{multline*}
(where $n$ is the number of variables).

Now let $f(z)=f_1(z)/f_0(z)$, where $f_1(z), f_0(z)\in K[z]$ have no common factor.
Let $F(z, X, Y)\in K[z, X, Y]$ be a homogeneous form in $X$ and $Y$ of degree $D$, say (suppressing the dependence on $z$ for brevity)
\[F(X, Y)=\sum_{i=0}^Da_iX^{D-i}Y^i\]
with $a_i\in K[z]$.
Then from the properties above, for any $i$
\begin{align*}
\log\|a_if_1^{D-i}f_0^i\|_v&\leq \log\|a_i\|_v+(D-i)\log\|f_1\|_v+i\log\|f_0\|_v\\
&\quad+(\deg(a_i)+(D-i)\deg(f_1)+i\deg(f_0))\log^+|2|\\
&\leq  \log\|F\|_v+D\log\|f_1, f_0\|_v \\&\quad+ (\deg_z(F)+D\deg(f))\log^+|2|_v
\end{align*}
and hence
\begin{multline}\label{eq:generalform}
\log\|F(f_1, f_0)\|_v\leq\log\|F\|_v+D\log\|f_1, f_0\|_v \\+ (\deg_z(F)+D\deg(f))\log^+|2|_v+\log^+|D+1|_v.
\end{multline}

Now, if
\[R(z, w)=\frac{b_0+\cdots +b_d w^d}{b_{d+1}+\cdots +b_{2d+1} w^d},\]
with $b_i\in K(z)$ without common factors,
 $A_i(X, Y)$ from Lemma~\ref{lem:res} is a homogeneous form of degree $d-1$ in $X$ and $Y$, each coefficient of which is the determinant of a $(2d-1)\times (2d-1)$ matrix whose entries are among the $b_j$. It follows that
\[\log\| A_i\|_v\leq (2d-1)\log\|R\|_v + \log^+|(2d-1)!|_v + (2d-1)\deg_z(R) \log^+|2|_v\]
while the degree of each coefficient of $A_i$ in $z$ is at most $(2d-1)\deg_z(R)$. It follows from ~\eqref{eq:generalform} that
\begin{multline*}
\log\| A_i(f_1, f_0)\|_v \leq (d-1)\log\| f_1, f_0\|_v+(2d-1)\log\|R\|_v + \log^+|(2d-1)!|_v \\ + (2d-1)\deg_z(R)\log^+|2|_v+(d-1)\deg(f)\log^+|2|_v
\end{multline*}
and similarly for $B_i$.
 
Since $A_i(f_1, f_0)$, $B_i(f_1, f_0)$ are polynomials in $z$ of degree at most $(d-1)\deg(f)$, and $P(f_1, f_0)$ and $Q(f_1, f_0)$ of degree at most $d\deg(f)$, we deduce for $i=1, 0$
\begin{align*}
 \log\| \Res(P, Q)f_i^{2d-1}\|_v & = 
\log\| A_i(f_1, f_0)P(f_1, f_0)+B_i(f_1, f_0)Q(f_1, f_0)\|_v\\
&\leq \log\| A_i(f_1, f_0)P(f_1, f_0), B_i(f_1, f_0)Q(f_1, f_0)\|_v\\&\quad+\log^+|2|_v \\
&\leq \log\| P(f_1, f_0), Q(f_1, f_0) \|_v \\&\quad+ \log\|  A_i(f_1, f_0), B_i(f_1, f_0) \|_v  \\ &\quad+(2d-1)\deg(f)\log^+|2|+\log^+|2|\\
&\leq  \log\| P(f_1, f_0), Q(f_1, f_0) \|_v +(d-1)\log\| f_1, f_0\|_v\\&\quad +(2d-1)\log\|R\|_v + \log^+|(2d-1)!|_v \\ &\quad + \Big((2d-1)\deg_z(R)_v+(3d-2)\deg(f)+1\Big)\log^+|2|
\end{align*}

On the other hand, $\Res(P, Q)\in K[z]$ has degree at most $2d\deg_z(R)$, so
\begin{multline*}
\log\| \Res(P, Q)f_i^{2d-1}\|_v\geq \log\| \Res(P, Q)\|_v+(2d-1)\log\| f_i\|_v \\-2d\deg_z(R)\log^+|2|_v-(2d-1)\deg(f)\log^+|2|_v.
\end{multline*}

Combining these, we have
\begin{multline}\label{eq:nullcrude}
d\log\| f_1, f_0\|_v\leq 	\log\| P(f_1, f_0), Q(f_1, f_0) \|  -\log\| \Res(P, Q)\|_v\\ 
+(2d-1)\log\|R\|_v + \log^+|(2d-1)!|_v  + (4d-1)\deg_z(R)\log^+|2|_v\\+(5d-3)\deg(f)\log^+|2|_v+\log^+|2|.
\end{multline}

At this point we note that $h(f)$ can be computed as a weighted sum of the terms $\log\|f_0, f_1\|_v$, for $v\in M_K$, as in~\eqref{eq:height}. One would like to compute $h(R(z, f(z)))$ by summing the terms $\log\|P(f_1, f_0), Q(f_1, f_0)\|_v$, but this works only if $P(f_1, f_0)$ and  $Q(f_1, f_0)$ have no common factor in $K[z]$, and they very possibly do.

Write $P(f_1, f_0)=g_1r$ and $Q(f_1, f_0)=g_0 r$, where $g_1, g_0\in K[z]$ have no common factor. Since $f_1$ and $f_0$ have no common factor, we have from~\eqref{eq:res} that $r$ divides $\Res(P, Q)$. Note that for $i=1, 0$
\begin{align*}
\log \| g_i r\|_v &\leq \log \| g_i \|_v+\log \| r\|_v+\deg(g_i)\log^+|2|_v+ \deg(r)\log^+|2|_v\\
&\leq  	\log \| g_i \|_v+\log \| r\|_v+(d\deg(f)+\deg_z(R))\log^+|2|_v.
\end{align*}
Writing $\Res(P, Q)=rs$, we have
\[\log\|r\|_v-\log\|\Res(P, Q)\|_v\leq \log\|r\|_v-\log\|rs\|_v \leq -\log\|s\|_v+\deg(\Res(P, Q))\log^+|2|_v,\]
and so from~\eqref{eq:nullcrude} we have
\begin{align}
d\log\| f_1, f_0\|_v &\leq 	\log\|g_0r, g_1r\|_v-\log\| \Res(P, Q)\|_v\nonumber \\ \nonumber
&\quad +(2d-1)\log\|R\|_v + \log^+|(2d-1)!|_v  + (4d-1)\deg_z(R)\log^+|2|_v\\\nonumber&\quad+(5d-3)\deg(f)\log^+|2|_v+\log^+|2|\\\nonumber
&\leq \log\|g_0, g_1\|_v+\log\|r\|_v-\log\| \Res(P, Q)\|_v \\\nonumber
&\quad +(2d-1)\log\|R\|_v + \log^+|(2d-1)!|_v  + 4d\deg_z(R)\log^+|2|_v\\\nonumber&\quad+(6d-3)\deg(f)\log^+|2|_v+\log^+|2|\\\nonumber
&\leq \log\|g_0, g_1\|_v-\log\|s\|_v+\deg(\Res(P, Q))\log^+|2|_v\\\label{eq:local}
&\quad +(2d-1)\log\|R\|_v + \log^+|(2d-1)!|_v  + 4d\deg_z(R)\log^+|2|_v\\&\quad+(6d-3)\deg(f)\log^+|2|_v+\log^+|2|\nonumber
\end{align}
Since $R(z, f(z))=g_1(z)/g_0(z)$ in lowest terms, we have from the definition~\eqref{eq:height} that
\[h(R(z, f(z)))=\sum_{v\in M_K}\frac{[K_v:\QQ_v]}{[K:\QQ]}\log\|g_1, g_0\|_v,\]
just as
\[h(f)=\sum_{v\in M_K}\frac{[K_v:\QQ_v]}{[K:\QQ]}\log\|f_1, f_0\|_v.\]
For the polynomial $s\neq 0$, we set
\[h_{\operatorname{poly}}(s)=\sum_{v\in M_K}\frac{[K_v:\QQ_v]}{[K:\QQ]}\log\|s\|_v,\]
which is the \emph{projective} height of the tuple of coefficients of $s$, not the height of $s$ as a rational function. Note that
\[0\leq h_{\operatorname{poly}}(s)\leq h(s),\]
where the second inequality follows from $\log \leq \log^+$, and the first from the product formula and $|s_i|_v\leq \|s\|_v$ for any $i$.

Note also that, for any integer $m$,
\[\sum_{v\in M_K}\frac{[K_v:\QQ_v]}{[K:\QQ]}\log^+|m|_v=\log m,\]
and so summing~\eqref{eq:local}
 over all $v\in M_K$, weighting by the local degree $\frac{[K_v:\QQ_v]}{[K:\QQ]}$, we obtain
\begin{multline}\label{eq:heightbound}
dh(f)\leq dh(f)+h_{\operatorname{poly}}(s)\leq h(R(f))+(2d-1)h(R)+\Big(\deg(\Res(P, Q))\\+4d\deg_z(R)+(6d-3)\deg(f)+1\Big)\log 2+\log (2d-1)!
\end{multline}
as $h_{\operatorname{poly}}(s)\geq 0$.
 \end{proof}
 
 \begin{example}
Our interest in Lemma~\ref{lem:height} lies largely in the existence of an explicit bound, but note that in special cases one can often do far better than~\eqref{eq:heightbound} by carrying out the elimination of variables directly. For example, let
\[R(z, w)=\frac{w^d+z}{w},\]
so that $P=w^d+z$, $Q=w$, and $\Res(P, Q)=(-1)^dz$, which has degree one and height zero. By~\eqref{eq:heightbound} we have
\begin{equation}\label{eq:badbound}
dh(f)\leq h\left(\frac{f^d+z}{f}\right)+((6d-3)\deg(f)+4d+2)\log 2 + \log (2d-1)!,
	\end{equation}
	and we remind the reader that $\log n!\approx n\log n$.
By a more direct calculation, though, we have
\begin{align*}
zX^{2d-1}&=zX^{d-1}(X^d+zY^d)-z^2X^{d-2}Y(XY^{d-1})\\
 zY^{2d-1}&=Y^{d-1}(X^d+zY^d)-X^{d-1}(XY^{d-1}).
\end{align*}
If follows that for any absolute value $v$ on $K$, we have (since $\|\pm z g(z)\|_v=\|g\|_v$)
\begin{align*}
(2d-1)\log\|f_1, f_0\|_v	& \leq \log\|f_1^d+zf_0^d, f_1f_0^{d-1}\|_v+\log\|zf_1^{d-1}, z^2f_1^{d-2}f_0, f_0^{d-1}, f_1^{d-1}\|_v\\&\quad +\log^+|2|_v\\
&\leq \log\|f_1^d+zf_0^d, f_1f_0^{d-1}\|_v+(d-1)\log\|f_1, f_0\|_v\\&\quad +((d-1)\deg(f)+1)\log^+|2|_v,
\end{align*}
whence
\begin{equation}\label{eq:localimproved}d\log\|f_1, f_0\|_v\leq \log\|f_1^d+zf_0^d, f_1f_0^{d-1}\|_v+((d-1)\deg(f)+1)\log^+|2|_v.\end{equation}
By the observation $\Res(P, Q)=\pm z$, the greatest common factor of $f_1^d+zf_0^d$ and $f_1f_0^{d-1}$ is either 1 or $z$, and again we have $\|\pm zg(z)\|_v=\|g\|_v$ for any polynomial $g$, and hence summing~\eqref{eq:localimproved} over all places, we obtain
\[dh(f)\leq h\left(\frac{f^d+z}{f}\right)+((d-1)\deg(f)+1)\log 2,\]
a clear improvement on~\eqref{eq:badbound} for computational purposes.
\end{example}

Our next lemma estimates the effect of substitutions, or taking derivatives, on the height of a rational function. 

\begin{lemma}\label{lem:subs}
Let $f(z)\in \overline{\QQ}(z)$. Then
\begin{align*}h(f(z+1))&\leq h(f)+\deg(f)\log 2 + \log (\deg(f)+1)\\
\intertext{and}
h(f')&\leq 2h(f)+4\deg(f)\log 2.
\end{align*}
\end{lemma}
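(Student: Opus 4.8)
The plan is to bound the height of $f(z+1)$ and $f'(z)$ locally at each place $v\in M_K$, using the multiplicativity and additivity estimates for $\|\cdot\|_v$ already established in the proof of Lemma~\ref{lem:height}, and then sum over $v$ weighted by local degrees. Write $f=f_1/f_0$ with $f_1,f_0\in K[z]$ coprime of degrees at most $k=\deg(f)$. The first subtlety is that after a substitution or differentiation, the new numerator and denominator need not be coprime, so I must only extract an \emph{inequality} $h(g_1/g_0)\le \sum_v (\text{local degree})\log\|g_1,g_0\|_v$ whenever $g_1/g_0$ is \emph{some} (not necessarily reduced) representation; this holds because passing to lowest terms only removes a common polynomial factor $r$, and $\log\|g_i\|_v\ge \log\|g_i/r\|_v - (\text{error})$ in a way that telescopes to $0$ after summing by the product formula, exactly as with the factor $h_{\mathrm{poly}}(s)$ in Lemma~\ref{lem:height}. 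So it suffices to bound $\log\|g_1,g_0\|_v$ for the naive representations.

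For the shift, $f(z+1)=f_1(z+1)/f_0(z+1)$, so I estimate $\|f_i(z+1)\|_v$. Writing $f_i(z)=\sum_{j} a_j z^j$, the binomial expansion gives $f_i(z+1)=\sum_m \big(\sum_{j\ge m}\binom{j}{m}a_j\big)z^m$. At a non-archimedean $v$ the coefficients $\binom{j}{m}$ are $v$-integers, so $\|f_i(z+1)\|_v\le \|f_i\|_v$ by the strong triangle inequality. At an archimedean $v$, each new coefficient is a sum of at most $k+1$ terms each of size at most $2^k\|f_i\|_v$ (since $\binom{j}{m}\le 2^j\le 2^k$), giving $\log\|f_i(z+1)\|_v\le \log\|f_i\|_v + k\log 2 + \log(k+1)$, using $\log^+|2|_v$ and $\log^+|k+1|_v$ to package the archimedean/non-archimedean cases uniformly. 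Summing over $v$ with the weights $[K_v:\QQ_v]/[K:\QQ]$ and using $\sum_v (\text{local degree})\log^+|m|_v=\log m$ yields $h(f(z+1))\le h(f)+\deg(f)\log 2+\log(\deg(f)+1)$.

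For the derivative, $f'=(f_1'f_0-f_1f_0')/f_0^2$, which is a representation with numerator of degree $\le 2k-1$ and denominator $f_0^2$ of degree $\le 2k$. I bound $\|f_i'\|_v$: differentiating multiplies the coefficient $a_j$ by $j\le k$, so at non-archimedean $v$, $\|f_i'\|_v\le\|f_i\|_v$, and at archimedean $v$, $\|f_i'\|_v\le k\|f_i\|_v$, uniformly $\log\|f_i'\|_v\le \log\|f_i\|_v+\log^+|k|_v$. Then $\log\|f_1'f_0-f_1f_0'\|_v\le \log\|f_1',f_0'\|_v+\log\|f_1,f_0\|_v+(\text{product-rule error})+\log^+|2|_v$, and $\log\|f_0^2\|_v\le 2\log\|f_0\|_v+(\text{error})$; combining, $\log\|\text{num},\text{den}\|_v\le 2\log\|f_1,f_0\|_v+O(\deg(f))\log^+|2|_v+O(1)\log^+|k|_v$. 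Summing gives $h(f')\le 2h(f)+C\deg(f)\log 2$, and a careful accounting of the constants from the product-rule errors (at most $4k$ factors of $2$ across the two terms, the squaring, and the subtraction) brings $C$ down to $4$. The main obstacle is purely bookkeeping: tracking the archimedean Mahler-type multiplicative constants (the $2^{n\deg}$ factors) together with the additive $\log^+|m|_v$ terms so that the final constants are exactly $\deg(f)\log 2+\log(\deg(f)+1)$ and $4\deg(f)\log 2$ rather than something larger — there is no conceptual difficulty beyond what is already in the proof of Lemma~\ref{lem:height}, and if one is willing to accept slightly weaker constants the argument is immediate.
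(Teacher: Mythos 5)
Your overall route is the paper's: local estimates on $\|\cdot\|_v$ (binomial expansion for the shift, the representation $(f_1'f_0-f_0'f_1)/f_0^2$ for the derivative), followed by a weighted sum over places as in~\eqref{eq:height}. But there is a genuine gap in your treatment of common factors. Your claimed general principle --- that passing from an arbitrary representation $g_1/g_0$ to lowest terms costs nothing because the errors ``telescope to $0$ after summing by the product formula'' --- is false as stated. The telescoping is exact only at the non-archimedean places, where Gauss's lemma gives $\|g_i r\|_v=\|g_i\|_v\|r\|_v$; at archimedean places one has only the Mahler/Gelfond inequality, so removing a common factor $r$ costs an extra term of size about $(\deg(g_i)+\deg(r))\log^+|2|_v$ per archimedean place, i.e.\ up to roughly $2\deg(f)\log 2$ globally. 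What the product formula actually buys is the harmless term $-h_{\operatorname{poly}}(r)\leq 0$, exactly as with $s$ in Lemma~\ref{lem:height} --- not the vanishing of the $\log 2$ errors. Consequently your argument for the first inequality does not deliver the stated constant $\deg(f)\log 2+\log(\deg(f)+1)$: you would be forced to add the archimedean clearing cost. The repair is precisely the observation you explicitly set aside: since $z\mapsto z+1$ is an automorphism of $K[z]$, the polynomials $f_1(z+1)$ and $f_0(z+1)$ remain coprime, so no clearing is needed for the shift; this is what the paper uses.

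For the derivative, a common factor genuinely can appear and genuinely costs something, and your bookkeeping misattributes it: forming $\log\|f_1'f_0-f_0'f_1,\, f_0^2\|_v$ costs about $2\deg(f)\log^+|2|_v$ (one takes the maximum of the numerator and denominator estimates, not their sum, so your ``$4k$ factors of $2$ across the two terms, the squaring, and the subtraction'' overcounts the formation while assigning zero to the reduction), and clearing the common factor costs another $2\deg(f)\log^+|2|_v$; the total $4\deg(f)\log 2$ arises from this split, not from formation alone. So your final constants happen to coincide with the lemma, but the sketch as written does not justify them; with the corrected attribution (and coprimality preserved under the shift), the proof closes exactly as in the paper.
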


\begin{proof}
We have for any $g(z)=\sum b_iz^i\in K[z]$,
\begin{align*}
\log\| g(z+1)\|_v &=\log\left\| \sum_{i=0}^{\deg(g)} b_i(z+1)^i\right\|_v\\
&=\log\left\| \sum_{i=0}^{\deg(g)} b_i\sum_{j=0}^i\binom{i}{j}z^j\right\|_v\\
&=\log\left\| \sum_{j=0}^{\deg(g)}\sum_{i=j}^{\deg(g)} b_j\binom{i}{j}z^j\right\|_v\\
&=\log\max_{0\leq j\leq \deg(g)}\left|\sum_{i=j}^{\deg(g)} b_j\binom{i}{j}\right|_v\\
&\leq  \log\max|b_j|_v+\deg(g)\log^+|2|_v+\log^+|\deg(g)+1|_v\\
& = \log\| g(z)\|_v +\deg(g)\log^+|2|_v+\log^+|\deg(g)+1|_v,
\end{align*}
since $\binom{i}{j}\leq 2^i\leq 2^{\deg(g)}$,
and so
\[\log\| f_1(z+1), f_0(z+1)\|_v\leq \log\| f_1(z), f_0(z)\|_v+\deg(f)\log^+|2|_v+\log^+|\deg(f)+1|_v.\]
Again, proceeding with the weighted sum~\eqref{eq:height}, we have
\[h(f(z+1))\leq h(f)+\deg(f)\log 2 + \log (\deg(f)+1),\]
where we note that $f_1(z+1)$ and $f_0(z+1)$ cannot have a common factor unless $f_1(z)$ and $f_0(z)$ did.

Similarly, to estimate $h(f)$ we will provide an upper bound on the quantity $\log\|f_1'f_0-f_0'f_1, f_0^2\|_v$ in each absolute value. First, note from ... above that
\[\log\|f_0^2\|_v\leq 2\log\|f_0\|_v+2\deg(f)\log^+|2|_v\leq 2\log\|f_1, f_0\|_v+2\deg(f)\log^+|2|_v.\]
On the other hand, for any polynomial $g(z)=\sum b_iz^i\in K[z]$
\begin{align*}
\log\|g'(z)\|_v	&=\log\left\|\sum_{i=1}^{\deg(g)}ib_{i}z^{i-1}\right\|_v\\
&\leq  \log\max_{1\leq i\leq \deg(g)}|b_i|_v+\log^+|\deg(g)|_v\\
&\leq  \log\|g\|_v+\log^+|\deg(g)|_v.
\end{align*}

\begin{align*}
\log\|f_1'f_0-f_0'f_1\|_v&\leq \log\max_{i=0, 1}\|f_i'f_{1-i}\|_v +\log^+|2|_v\\
&\leq  \max_{i=0, 1}\big(\log\|f_i'\|_v+\log\|f_{1-i}\|_v+\deg(f_i')\log^+|2|_v\\&\quad+\deg(f_{1-i})\log^+|2|_v\big)+\log^+|2|_v\\
&\leq  2\log\|f_1, f_0\|_v+2\deg(f)\log^+|2|_v.
\end{align*}
In any case, we have
\begin{equation}\label{eq:wronk}\log\|f_1'f_0-f_0'f_1, f_0^2\|_v\leq 2\log\|f_1, f_0\|_v+2\deg(f)\log^+|2|_v.\end{equation}
Now, it is of course possible that $f_1'f_0-f_0'f_1$ and $f_0^2$ will have a common factor. Write
$f_1'f_0-f_0'f_1=g_1r$ and $f_0^2=g_0r$, so that
\begin{align*}
\log\|g_i\|_v&\leq \log\|g_ir\|_v-\log\|r\|_v+\deg(g_i)\log^+|2|_v+\deg(r)\log^+|2|_v\\	
&\leq  \log\|g_ir\|_v-\log\|r\|_v+2\deg(f)\log^+|2|_v.
\end{align*}
Combining this with~\eqref{eq:wronk}, we have by summing over all places that
\[h(f'(z))\leq 2h(f)-h_{\operatorname{poly}}(r)+4\deg(f)\log 2\]
proving the claim, since $h_{\operatorname{poly}}(r)\geq 0$. 
\end{proof}

Before continuing, we note that the solutions to~\eqref{eq:diff} and~\eqref{eq:eq} are now rather constrained if all have rational coefficients. If $R(z, w)\in \QQ(w, z)$ has $d=\deg_w(R)\geq 2$, and $f(z)\in\QQ(z)$ is a solution to~\eqref{eq:eq}, then it follows from Lemmas~\ref{lem:degree},  \ref{lem:height}, and~\ref{lem:subs} (and the fact that $\deg(\Res(P, Q))\leq 2d\deg_z(R)$) that
\begin{align*}
	dh(f)&\leq h(R(f))+(2d-1)h(R)+((24d-9)\deg_z(R)+1)\log 2 \\&\quad + \log(2d-1)!\\
	&= h(f(z+1))+(2d-1)h(R)+((24d-9)\deg_z(R)+1)\log 2 \\&\quad + \log(2d-1)!\\
	&\leq h(f)+(2d-1)h(R)+((24d-6)\deg_z(R)+1)\log 2 \\&\quad + \log(2d-1)!+\log(3\deg_z(R)+1),
\end{align*}
whence
\begin{multline*}
h(f)\leq \frac{1}{d-1}\Big((2d-1)h(R)+((24d-6)\deg_z(R)+1)\log 2 \\ + \log(2d-1)!+\log(3\deg_z(R)+1)\Big).
\end{multline*}
In particular, the degree of $f$ is bounded, and the coefficients of $f$ are drawn from a finite set (depending on $R$). It is not yet clear, though, that there are finitely many solutions to~\eqref{eq:eq} given just that $R$ has rational coefficients, since \emph{a priori} the solutions might not.

\begin{remark}\label{rm:gendiff}
We note also that there is nothing particularly special in Lemma~\ref{lem:subs} about the substitution $z\mapsto z+1$. In general, if $\sigma\in\operatorname{Aut}(\overline{\QQ}(z)/\overline{\QQ})$	then $\sigma(z)$ is a fractional linear transform with algebraic coefficients, and the triangle inequality (as in the proof of Lemma~\ref{lem:subs}) gives
\[h(f\circ\sigma)\leq h(f)+dh(\sigma)+2\deg(f)\log 2 +\log(\deg(f)+1).\]
It follows that the machinery in this paper could easily be used to study generalized difference equations of the form
\begin{equation}\label{eq:gendiff}f\circ \sigma(z)=R(z, f(z)),\end{equation}
with $\sigma\in\operatorname{Aut}(\PP^1)$ fixed. Indeed, the argument in the previous paragraph shows that the set of $f(z)\in\overline{\QQ}(z)$ solving \emph{any} generalized difference equation of the form~\eqref{eq:gendiff} with $\sigma$ and $R$  drawn from sets of bounded height and degree (in the case of $R$) is itself a set of bounded height. It is not clear to the author, however, whether one should expect to be able to extend Yanagihara's result to this setting.
\end{remark}

\begin{lemma}
Let $R(z, w)\in \CC(z, w)$ with $d=\deg_w(R)\geq 2$, and fix $k$. Then the set $X$ of $f\in \operatorname{Hom}_k(\CC)$ solving either~\eqref{eq:diff} or~\eqref{eq:eq} is Zariski closed. Furthermore, the degrees of the irreducible components of $X$ sum to at most $(d+1)^{(d+1)k+\deg_z(R)}$, in the case of solutions to~\eqref{eq:eq}, or $(d+2)^{(d+2)k+\deg_z(R)}$ in the case of solutions to~\eqref{eq:diff}.
\end{lemma}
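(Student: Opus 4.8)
The plan is to realize $X$ as the zero locus, inside $\operatorname{Hom}_k$, of an explicit finite system of forms in the coefficient vector $\mathbf{c}$, and then to bound the degree of that locus by the Bézout inequality. First I would set up the equations. Identify $f$ as in~\eqref{eq:ratd} with $\mathbf{c}=[c_0:\cdots:c_{2k+1}]\in\PP^{2k+1}$ and write $f=f_1/f_0$ with $f_1,f_0\in\CC[z]$ coprime of degree at most $k$; recall that $R(z,X/Y)=P(X,Y)/Q(X,Y)$ with $P,Q$ homogeneous of degree $d$ in $X,Y$ and coefficients in $\CC[z]$ of degree at most $\deg_z(R)$. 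Because $P$ and $Q$ are homogeneous of the same degree, $R(z,f(z))=P(f_1(z),f_0(z))/Q(f_1(z),f_0(z))$ as an identity of rational functions, and for $k\geq 1$ we have $Q(f_1,f_0)\not\equiv 0$ (otherwise $f_1/f_0$ would be constant). Clearing denominators, $f$ solves~\eqref{eq:eq} exactly when
\[
G(z,\mathbf{c}):=f_1(z+1)\,Q\bigl(f_1(z),f_0(z)\bigr)-f_0(z+1)\,P\bigl(f_1(z),f_0(z)\bigr)=0
\]
identically in $z$, and $f$ solves~\eqref{eq:diff} exactly when
\[
G^{\flat}(z,\mathbf{c}):=\bigl(f_1'f_0-f_1f_0'\bigr)Q(f_1,f_0)-f_0^{2}\,P(f_1,f_0)=0
\]
identically in $z$.

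Next I would read off the structure of these systems. Expanding $G$ in powers of $z$: since $f_1(z+1)$ has $z$-degree at most $k$ and $Q(f_1,f_0)$ has $z$-degree at most $dk+\deg_z(R)$, one gets $G=\sum_{j=0}^{M}G_j(\mathbf{c})z^j$ with $M=(d+1)k+\deg_z(R)$, and each $G_j$ is zero or a form of degree $d+1$ in $\mathbf{c}$ (as $f_1(z+1)$ is linear in the coordinates while $P(f_1,f_0)$ and $Q(f_1,f_0)$ are forms of degree $d$). Likewise $G^{\flat}=\sum_{j=0}^{\tilde M}G^{\flat}_j(\mathbf{c})z^j$ with $\tilde M=(d+2)k+\deg_z(R)$ and each $G^{\flat}_j$ a form of degree $d+2$, since $f_1'f_0-f_1f_0'$ is a form of degree $2$ in $\mathbf{c}$. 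Hence $X=\operatorname{Hom}_k\cap V$, where $V\subseteq\PP^{2k+1}$ is cut out by $G_0,\dots,G_M$ (respectively by $G^{\flat}_0,\dots,G^{\flat}_{\tilde M}$); in particular $X$ is Zariski closed in $\operatorname{Hom}_k$.

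Finally I would extract the degree bound. Since $\operatorname{Hom}_k$ is the complement of the hypersurface $\{\Res(\mathbf{c})=0\}$, it is open, so for each irreducible component $V_i$ of $V$ the intersection $V_i\cap\operatorname{Hom}_k$ is open in $V_i$, hence empty or dense; the Zariski closure of $X$ is therefore the union of those $V_i$ that meet $\operatorname{Hom}_k$, so $X$ and that closure have the same irreducible components, and the degrees of the components of $X$ sum to at most $\deg(V)$. As $V$ is an intersection of at least $2k+1$ hypersurfaces of degree $d+1$ in $\PP^{2k+1}$, the Bézout inequality (in the form bounding the sum of the degrees of all irreducible components of an intersection of hypersurfaces) gives $\deg(V)\leq(d+1)^{2k+1}$, and $2k+1\leq(d+1)k\leq M$ for $k\geq 1$ and $d\geq 2$, so $\deg(V)\leq(d+1)^{(d+1)k+\deg_z(R)}$; the differential case is identical with $d+1$ replaced by $d+2$, using $2k+1\leq(d+2)k\leq\tilde M$.

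The main obstacle is not any single computation but arranging the reduction so that the Bézout count applies cleanly: the set $X$ is only locally closed, which is why the complement-of-a-hypersurface observation is needed to transfer the degree estimate from $V$ to $X$, and one must invoke a version of Bézout valid for arbitrary (possibly excess) intersections of hypersurfaces rather than the naive transversal form. One may also note in passing that $G\not\equiv 0$ — two distinct degree-$k$ rational functions agreeing at a point cannot both satisfy~\eqref{eq:eq} — so that $V\neq\PP^{2k+1}$, although this is not needed for the stated bound.
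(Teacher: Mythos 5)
Your setup is identical to the paper's: you clear denominators in~\eqref{eq:eq} (resp.~\eqref{eq:diff}), expand in powers of $z$ to get forms of degree $d+1$ (resp.\ $d+2$) in $\mathbf{c}$ with the same count $(d+1)k+\deg_z(R)$ (resp.\ $(d+2)k+\deg_z(R)$), and you pass from the projective zero set $V$ to $X=V\cap\operatorname{Hom}_k$ exactly as the paper does. The one place you diverge is the final degree count, and that step is not justified as written. You claim $\deg(V)\leq(d+1)^{2k+1}$ ``as $V$ is an intersection of at least $2k+1$ hypersurfaces of degree $d+1$ in $\PP^{2k+1}$,'' but the exponent $2k+1$ cannot be obtained by simply selecting $2k+1$ of the defining hypersurfaces: $V$ is only a subvariety of such a partial intersection, and degree is not monotone under passing to subvarieties (a plane contains curves of arbitrarily large degree). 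What you need is the refined B\'ezout inequality with the exponent capped by the ambient dimension; this is true, and it is proved by precisely the componentwise induction the paper runs via Hartshorne II.7.7 --- for each irreducible component, the next hypersurface either contains it or cuts its dimension by one and multiplies the degree bound by $d+1$, and the dimension can drop at most $2k+1$ times. The paper carries out this induction with exponent equal to the number of forms, which yields its stated bound; your dimension-capped exponent is in fact sharper for $k\geq 1$, but it must be argued, not asserted in the form you gave.

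Two smaller points. First, your comparison $2k+1\leq(d+1)k$ requires $k\geq 1$, so as written you have not recovered the stated bound for $k=0$ (constant solutions); there your argument gives $(d+1)^{1}=d+1$, which exceeds the stated bound precisely when $\deg_z(R)=0$, so this case needs separate comment. Second, $Q(f_1,f_0)\equiv 0$ does not force $f$ to be constant (take $Q=(X-zY)^d$ and $f=z$); the correct way to see that the vanishing of your forms characterizes solutions even in this degenerate case is to use the resultant identity of Lemma~\ref{lem:res}: $P(f_1,f_0)$ and $Q(f_1,f_0)$ cannot both vanish identically, so if $Q(f_1,f_0)\equiv 0$ then $G\not\equiv 0$, and such an $f$ is neither a solution nor a point of $V$. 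With the B\'ezout step repaired along the paper's inductive lines, your argument goes through and is essentially the paper's proof.
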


\begin{proof}
Write a generic $f$ of degree $k$ as in~\eqref{eq:ratd}.
Then 
\[f_1(z+1)Q(f_1(z), f_0(z))-f_0(z+1)P(f_1(z), f_0(z))=\sum_{i=0}^{(d+1)k+\deg_z(R)}\Phi_i(\mathbf{c})z^i\]
for some homogeneous  forms $\Phi_i$ of degree $d+1$ in $\mathbf{c}$, which depend on $P$ and $Q$. A solution to~\eqref{eq:eq} is given exactly by the simultaneous vanishing of these homogeneous forms (some of which might already be the zero form). It follows from \cite[Theorem~7.7, p.~53]{MR0463157} that if $Z\subseteq \PP^{2k+1}$ is Zariski closed and irreducible, and $H\subseteq \PP^{2k+1}$ is a hypersurface, then either $Z\subseteq H$ or the irreducible components of $H\cap Z$ have degree summing to at most $\deg(H)\deg(Z)$. By induction, the intersection of $(d+1)k+\deg_z(R)$ homogeneous forms of degree $d+1$ has irreducible components of degree summing to at most $(d+1)^{(d+1)k+\deg_z(R)}$. This proves the statement for the intersection of the hypersurfaces $\Phi_i=0$ in $\PP^{2k+1}$, and the irreducible components of the intersection in $\operatorname{Hom}_k$ are just the intersections with $\operatorname{Hom}_k$ of those components on which $\Res(\mathbf{c})$ does not vanish identically.

In the case of solutions to~\eqref{eq:diff}, we have
\begin{multline*}
(f_1'(z)f_0(z)-f_0'(z)f_1(z))Q(f_1(z), f_0(z))-f_0^2(z)P(f_1(z), f_0(z))\\=\sum_{i=0}^{(d+2)k+\deg_z(R)}\Phi_i(\mathbf{c})z^i,	
\end{multline*}
where the $\Phi_i$ are homogeneous forms of degree $d+2$. The same argument as above now gives that the vanishing of these forms (which corresponds exactly with solutions to~\eqref{eq:diff}), defines a Zariski-closed subset of $\operatorname{Hom}_k$ of degree at most $(d+2)^{(d+2)k+\deg_z(R)}$.
\end{proof}

\section{The proof of the main result}

\begin{proof}[Proof of Theorem~\ref{th:finite}]
Let $R(z, w)\in \CC(z, w)$, and let $f_1(z), ..., f_\ell(z)\in \CC(z)$ be some distinct solutions to~\eqref{eq:eq}, all of degree exactly $k$. Let $F\subseteq \CC$ be the subfield generated over $\QQ$ by the coefficients of $R$ and the $f_i$, a finite set of complex numbers. We will prove by induction on the transcendence degree of $F$ over $\QQ$ that \[\ell\leq (d+1)^{(d+1)k+\deg_z(R)}.\] Since $k\leq 3\deg_z(R)$, by Lemma~\ref{lem:degree}, the total number of solutions to~\eqref{eq:eq} in $\CC(z)$ will be at most the estimate in~\eqref{eq:B}.

Consider first the base case, where $R$ and the $f_i$ are defined over the algebraic numbers. Then the $f_i$ correspond to points in $X\subseteq \operatorname{Hom}_{k}$, the irreducible components of which have degree summing to at most $(d+1)^{(d+1)k+\deg_z(R)}$. It suffices to show that these irreducible components are points, so suppose to the contrary that $X$ has a component of positive dimension. Then $X$ contains a curve $Y$, which admits a non-constant map to $\PP^1$, all defined over some number field, and so there is some $D\geq 0$ such that $Y(\overline{\QQ})$ contains infinitely many points of algebraic degree at most $D$. But by Lemma~\ref{lem:height}, $Y(\overline{\QQ})$ is a set of bounded height, contradicting Lemma~\ref{lem:northcott}. This completes the proof for the case in which $F\subseteq \overline{\QQ}$.

Now suppose  that $F$ contains transcendental elements, and that the inequality $\ell\leq (d+1)^{(d+1)k+\deg_z(R)}$ is know for all fields of lower transcendence degree.
Let $K\subseteq F$ be a maximal subfield of transcendence degree one less than that of $F$. Then $F$ is isomorphic over $K$ to the function field $K(C)$ of some algebraic curve $C/K$. We will fix coefficients of $R$ and $f_i$ in $F$, and identify them with their images in $K(C)$, which are rational functions on $C$ defined over $k$. Since there are only finitely many of these functions, there is an affine open $U\subseteq C$ on which all are regular. For $P\in C(\overline{K})$, we may evaluate these coefficients at $P$ to obtain a rational function $R_{P}(z, w)\in \overline{K}(z, w)$ and $f_{i, P}(z)\in \overline{K}(z)$. Note that the degrees of the specializations may be less than the original degrees.

Now, once we have written each $f_i$ as in~\eqref{eq:ratd}, we may compute the resultant of the numerator and denominator of $f_i$ with the chosen coefficients, and obtain a not-identically-zero regular function $\Res(f_i)\in K[U]$, with the property that $\Res(f_i)(P)=0$ if and only if $\deg(f_{i, P})<k$. We also have $\Res(P, Q)\in K[U, z]$, and we will choose a not-identically-zero coefficient $r$ of this polynomial, and a coefficient $s$ of the largest power of $z$ appearing in $R$. There is a Zariski open $V\subseteq U$ on which $r$, $s$, and the $\Res(f_i)$ as nowhere vanishing, and for $P\in V(\overline{K})$, the specializations $f_{1, P}, ..., f_{\ell, P}\in \overline{K}(z)$ are solutions of degree exactly $k$ to $g(z+1)=R_{P}(z, g(z))$, and $\deg_w(R_P)=\deg_w(R)$ and $\deg_z(R_P)=\deg_z(R)$.

 Finally, since the $f_{i}$ are distinct, there is for every $i\neq j$ some cross-ratio of coefficients $c_{i, s}c_{j, t}-c_{i, t}c_{j, s}\in k[V]$ which is not identically zero. For each pair $i\neq j$ we choose such a cross-ratio, and an affine $W\subseteq V$ on which none of these functions vanish. So for any $P\in W(\overline{k})$ the specializations $f_{1, P}, ..., f_{\ell, P}$ are \emph{distinct} solutions of degree $k$ to the specialized difference equation, all defined over the field $\overline{K}$ of transcendence degree one less than that of $F$. The induction hypothesis applies to this example, completing the proof that $\ell\leq (d+1)^{(d+1)k+\deg_z(R)}$ in general, and hence of Theorem~\ref{th:finite}(A). The proof of Theorem~\ref{th:finite}(B) is analogous.
\end{proof}

%\begin{remark}
%The possibility of postive-dimensional components of the variety $X$ is real. If $f=f_1/f_0$ is one solution to~\eqref{eq:eq} then, in degree one higher, the variety $X$ will include the coefficients of the pair $f_1(\alpha z+\beta), f_0(\alpha z+\beta)$ for any $\alpha, \beta$, giving an embedding  $\PP^1\to X$.	The image of this map, of course, lands in the resultant locus. This also gives solutions of arbitrarily large height if we allow extraneous factors.
%\end{remark}

Finally, we justify our assertion in the introduction that solutions to~\eqref{eq:diff} and~\eqref{eq:eq} must have algebraic coefficients when $R$ does (subject to the usual hypotheses on $\deg_w(R)$).

\begin{corollary}
Let $R(z, w)\in K(z, w)$ for some subfield $K\subseteq \CC$. Then every solution to~\eqref{eq:diff} and \eqref{eq:eq}	(with $\deg_w(R)\geq 3$ or $\deg_w(R)\geq 2$ respectively) in $\CC(z)$ is already contained in $\overline{K}(z)$. In particular, if $R$ has algebraic coefficients, then so does any solution $f$.
\end{corollary}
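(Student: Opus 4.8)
The plan is to derive the corollary from the finiteness already furnished by Theorem~\ref{th:finite}, together with the elementary observation that a finite variety defined over a field $K$ has all of its points defined over $\overline{K}$. So I would begin with a solution $f\in\CC(z)$ to~\eqref{eq:eq} (the case of~\eqref{eq:diff} being handled identically, with $\deg_w(R)\geq 3$). By the lemma above bounding the degree of a solution, $k:=\deg(f)\leq 3\deg_z(R)$, so $f$ is represented as in~\eqref{eq:ratd} by a point $\mathbf{c}_f\in\operatorname{Hom}_k(\CC)$.

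The next step is to observe that the degree-$k$ solution locus is cut out over $K$. Since $R\in K(z,w)$, one can clear denominators and then cancel a common factor entirely within $K[z][X,Y]$ (greatest common divisors of polynomials being unaffected by field extension), so the homogeneous forms $P$ and $Q$ — and hence $\Res(P,Q)$ and the forms $\Phi_i(\mathbf{c})$ appearing in the lemma on solutions of a given degree — have coefficients in $K$. Thus the locally closed subset $X\subseteq\operatorname{Hom}_k$ consisting of degree-$k$ solutions, namely the common zero locus of the $\Phi_i$ inside the complement of $\{\Res(\mathbf{c})=0\}$, is defined over $K$, and $\mathbf{c}_f\in X(\CC)$.

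Now comes the one genuinely substantive input. Any finite collection of solutions in $\CC(z)$ to~\eqref{eq:eq} of degree $k$ generates a finitely generated subfield of $\CC$, over which Theorem~\ref{th:finite} applies; hence $X(\CC)$ is finite, of cardinality at most the bound~\eqref{eq:B}. Since the dimension of a finite-type $K$-scheme is unchanged under base change to $\CC$, it follows that $X$ is a zero-dimensional $K$-scheme, so every closed point of $X$ has residue field a finite extension of $K$, and therefore $X(\CC)=X(\overline{K})$. Consequently $\mathbf{c}_f\in X(\overline{K})$, which is exactly the statement that $f\in\overline{K}(z)$. Taking $K=\overline{\QQ}$ (or $K=\QQ$ when $R\in\QQ(z,w)$) yields the final assertion about algebraic coefficients.

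I expect the only real obstacle to be appreciating that the structural lemma on solutions of a given degree is insufficient by itself: it controls only the sum of the degrees of the irreducible components of $X$, and \emph{a priori} $X$ could contain a curve defined over $K$ carrying a Zariski-dense set of transcendental points. It is precisely the transcendence-rank induction in the proof of Theorem~\ref{th:finite} — resting on the height bound of Lemma~\ref{lem:height} and the finiteness statement of Lemma~\ref{lem:northcott} — that eliminates positive-dimensional components, after which the descent of the coefficients of $f$ to $\overline{K}$ is purely formal.
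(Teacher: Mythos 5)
Your proof is correct and rests on the same key input as the paper's: the main theorem forces the degree-$k$ solution locus in $\operatorname{Hom}_k$ to be zero-dimensional, after which the descent of the coefficients to $\overline{K}$ is formal. The only difference is packaging — the paper spreads $f$ out over a variety $Z$ with $K(Z)$ containing its coefficients and concludes that the induced map $Z\to\operatorname{Hom}_k$ into the finite solution set is constant, whereas you invoke the $K$-rationality of the defining forms $\Phi_i$ and note that a zero-dimensional finite-type $K$-scheme has all its $\CC$-points already in $\overline{K}$; both are valid and essentially equivalent.
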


\begin{proof}
Any solution $f(z)\in \CC(z)$ is defined over some finitely-generated extension of $K$, which is isomorphic to $K(Z)$ for some irreducible algebraic variety $Z$. 	If $f$ has degree $k$, then $f$ induces a map $Z\to \operatorname{Hom}_k$ defined over $\CC$, whose image is not contained in the resultant locus. But since the solutions to~\eqref{eq:eq} or~\eqref{eq:diff} in $\operatorname{Hom}_k$ are a finite union of zero-dimensional subvarieties, it follows that this map is constant, and hence $f$ was already defined over $K$.
\end{proof}

\bibliography{nevan}
\bibliographystyle{plain}

\end{document}